\RequirePackage{fix-cm}
\documentclass[smallextended]{svjour3} 
\smartqed

\makeatletter
\@addtoreset{equation}{section}
\makeatother

\usepackage{amssymb,amsmath,amsfonts,latexsym}
\usepackage{mathtools}
\usepackage{multirow,array}
\usepackage{graphicx}
\usepackage{subfigure}
\usepackage{epstopdf}
\usepackage{float}
\usepackage{color,xcolor}
\usepackage{booktabs}
\usepackage{mathrsfs}
\usepackage{placeins}
\allowdisplaybreaks
\usepackage[hidelinks]{hyperref}
\graphicspath{{./}{figures/}{results_LLF_Caputo_article/}{results_LLF_FDE_article/}}

\def\proofend{\hfill$\Box$}

\journalname{Fract. Calc. Appl. Anal.}

\begin{document}

\title{An Exact-Moment Local Legendre Frame Method with Block Convolution for Caputo Fractional Differentiation}

\titlerunning{Exact-moment LLF method for Caputo fractional differentiation}

\author{Zhenyu Zhao$^{1,*}$
\and Benxue Gong$^1$
\and Tinggang Zhao$^1$
\and Xianzheng Jia$^1$}

\authorrunning{Z. Zhao et al.}

\institute{Zhenyu Zhao$^{1,*}$
\at School of Mathematics and Statistics, Shandong University of Technology, Zibo, 255049, China \\
\email{dongdongzb@gmail.com} $^*$ corresponding author
\and Benxue Gong$^{1}$
\at School of Mathematics and Statistics, Shandong University of Technology, Zibo, 255049, China
\and Tinggang Zhao$^{1}$
\at School of Mathematics and Statistics, Shandong University of Technology, Zibo, 255049, China
\and Xianzheng Jia$^{1}$
\at School of Mathematics and Statistics, Shandong University of Technology, Zibo, 255049, China
}

\date{}

\maketitle

\begin{abstract}
We propose a local Legendre frame method for the accurate computation of
Caputo fractional derivatives of order \(0<\alpha<1\). On each local
subinterval, the function is represented by a restricted Legendre frame
obtained from scaled Legendre polynomials on an extended interval. The local
coefficients are computed from equispaced samples by an exponentially weighted
GTSVD regularization. The Caputo derivative is then evaluated by applying the
weakly singular fractional integral to the derivatives of the local frame
basis functions. Since these derivatives are polynomials, the corresponding
Caputo weights can be written in terms of finite weighted moments, so that the
singular kernel is treated analytically rather than by a low-order quadrature
rule. For uniform partitions, the history weights have a block-dependent
structure and can be reused efficiently. The error analysis separates the
local frame reconstruction from the Caputo integration. In particular, the
Caputo error is bounded by the derivative reconstruction error, while the
latter is obtained from the \(L^2\) reconstruction error and a weighted
smoothness bound of the GTSVD approximation through an interpolation argument.
For analytic local functions with exponential coefficient decay, this leads
to exponential-type convergence of the derivative and hence of the Caputo
approximation. Numerical experiments confirm the accuracy of the exact moment
weights, the effectiveness of the local weighted reconstruction, and the
efficiency of the block implementation.

\keywords{Caputo fractional derivative \and local Legendre frame \and equispaced data \and exact moment weights \and block convolution \and fractional differential equation}

\subclass{26A33 (primary) \and 65D25 \and 65L05 \and 65R20 \and 42C15}
\end{abstract}

\section{Introduction}
\label{sec:introduction}

Fractional calculus provides a useful mathematical framework for modeling
memory, hereditary effects, and nonlocal interactions
\cite{Podlubny1999,Diethelm2010,LiLiu2018,Luchko2020}. Fractional differential
operators have been used in anomalous diffusion, viscoelasticity, relaxation
processes, porous media transport, and related applications
\cite{Podlubny1999,Diethelm2010,YanSunZhang2017}. Among
the commonly used fractional derivatives, the Caputo derivative is especially
convenient for initial value problems because it is compatible with classical
initial conditions and the derivative of a constant vanishes
\cite{Podlubny1999,Diethelm2010}. For
\(0<\alpha<1\), the left-sided Caputo derivative is
\begin{equation}
{}^{C}D_{0+}^{\alpha}f(x)=
\frac{1}{\Gamma(1-\alpha)}\int_0^x (x-s)^{-\alpha} f'(s)\,ds .
\label{eq:caputo_def_intro}
\end{equation}
The weakly singular kernel and the dependence on the whole history make the
accurate and efficient computation of \eqref{eq:caputo_def_intro} more delicate
than classical integer-order differentiation.

Many numerical methods have been developed for Caputo derivatives. Classical
finite difference formulas, such as the L1 approximation and its higher-order
variants, are especially useful in time-fractional evolution problems because
they lead to causal step-by-step schemes
\cite{Lubich1986,Alikhanov2015,YanSunZhang2017,MokhtariMostajeran2019}.
At each time level, the current unknown is coupled with already computed
history values. This triangular structure explains why low-order time-stepping
schemes remain important in practical simulations. However, direct history
summation requires \(O(M^2)\) work for \(M\) time levels, and fast history
algorithms based on kernel compression or sum-of-exponentials approximations
are often needed for long-time computations
\cite{YanSunZhang2017}. Moreover, such schemes have
algebraic convergence rates and may require many time levels when high accuracy
is desired for smooth data.

Another important class of high-order methods is based on global spectral
approximation. Legendre, Chebyshev, and Jacobi spectral methods have been used
for fractional integrals, Caputo derivatives, and fractional differential
equations \cite{LiZengLiu2012}. For analytic functions, global spectral methods
can achieve very high accuracy. However, their accuracy usually relies on a
global approximation over the entire interval, often with nonuniform spectral
nodes. This global structure is less convenient when data are naturally sampled
on equispaced grids, when localized or piecewise smooth structures are present,
or when one wants a blockwise time-marching formulation rather than an
all-at-once global-in-time discretization.

Frame approximation offers a flexible way to construct stable approximations
from redundant or nearly redundant systems
\cite{AdcockHuybrechs2019,AdcockHuybrechs2020}. Fourier extension is a typical
example of frame approximation for nonperiodic functions
\cite{Boyd2002,Huybrechs2010,MatthysenHuybrechs2016,MatthysenHuybrechs2018}.
Local Fourier extension and local Legendre frame methods further combine the
frame idea with interval subdivision and reusable local discretizations
\cite{GongZhaoWang2026}. In the present Caputo setting, we choose a local
Legendre frame rather than a Fourier extension frame because the derivatives of
the Legendre frame functions are polynomials. Therefore the Caputo action on
these derivatives can be written in terms of finite algebraic moments, which is
more direct than treating oscillatory Fourier basis functions inside a weakly
singular fractional integral.

The least-squares systems associated with frame approximations are typically
ill-conditioned, and regularization is required. Weighted generalized inverses
and GSVD-type regularizations provide a mechanism for imposing structural
information on the recovered coefficients
\cite{Hansen1989,Hansen1992,ZhaoWangLi2026}. This is relevant for Caputo
differentiation because the operator in \eqref{eq:caputo_def_intro} acts on the
first derivative of \(f\), and hence the stability of the integer-order
derivative reconstruction directly affects the fractional derivative.

Motivated by these observations, we develop an exact-moment local Legendre
frame (LLF) method for Caputo fractional differentiation from local equispaced
data. The method separates the computation into two stages. First, a stable
local Legendre frame representation is recovered by an exponentially weighted
generalized inverse. Second, the Caputo operator is applied exactly to the
derivatives of the local frame functions. Since these derivatives are
polynomials, the weakly singular integral is reduced to explicit weighted
moments. For a uniform partition, the history weights depend only on relative
block positions, which yields a block-convolution implementation.

The main contributions of this paper are summarized as follows.
\begin{itemize}
\item We formulate a Caputo differentiation method based on the restricted
scaled Legendre frame. The sampling points are local equispaced points on the
reference interval, and all physical subintervals share the same local frame
matrix.
\item We derive exact moment formulas for the Caputo action on derivatives of
local frame functions. Thus the weak singularity is incorporated into
precomputed weights, without a separate low-order quadrature of the singular
kernel.
\item We show that the Caputo error is controlled by the local derivative
reconstruction error. The analysis is stated in terms of the actual weighted
GTSVD reconstruction, not in terms of an ordinary Legendre projection.
\item We describe the computational complexity in terms of the total number of
sampling points. For fixed local parameters, the storage of history weights is
linear in the number of samples, and FFT-based history accumulation is nearly
linear up to a logarithmic factor.
\item Numerical experiments include a closed-form verification of the exact
moment weights, comparisons with the classical L1 scheme and a global
Chebyshev spectral benchmark, and a manufactured fractional differential
equation. The results demonstrate the accuracy of the moment construction,
high accuracy from local equispaced data, and the blockwise structure of the
method.
\end{itemize}

The remainder of this paper is organized as follows. Section~\ref{sec:local_llf}
introduces the local Legendre frame approximation and the exponentially weighted
generalized inverse. Section~\ref{sec:caputo_llf} derives the exact-moment
Caputo weights and the block-convolution implementation. Section~\ref{sec:error_analysis}
presents the error analysis. Section~\ref{sec:numerical_experiments} reports
numerical experiments. Section~\ref{sec:conclusion} concludes the paper.

\section{Local Legendre Frame Approximation with Exponentially Weighted Generalized Inverse}
\label{sec:local_llf}

This section recalls the local Legendre frame structure used in the paper and
introduces the exponentially weighted generalized inverse. The important point
is that the method does not use the ordinary Legendre basis on \([-1,1]\) as an
orthogonal projection basis. Instead, it uses Legendre polynomials orthonormal
on a larger interval and restricts them to the physical reference interval. The
resulting system is a polynomial frame on the smaller interval.

\subsection{Restricted scaled Legendre frame}
\label{subsec:restricted_frame}

Let
\[
\Lambda=[-1,1],\qquad \Lambda_e=[-T,T],\qquad T>1 .
\]
Let \(\{p_j\}_{j\ge 0}\) be the orthonormal Legendre basis on \([-1,1]\), and
define the scaled Legendre functions on the extended interval by
\begin{equation}
P_j^{(T)}(t)=\frac{1}{\sqrt T}p_j\left(\frac{t}{T}\right),
\qquad t\in[-T,T],\qquad j=0,1,\ldots .
\label{eq:scaled_legendre_frame_basis}
\end{equation}
Then \(\{P_j^{(T)}\}_{j\ge0}\) is orthonormal in \(L^2([-T,T])\). In this work,
however, the approximation is evaluated and fitted only on \(\Lambda=[-1,1]\).
Thus the finite system
\[
\Phi_{N_b}^{(T)}=\{P_j^{(T)}|_{\Lambda}:j=0,\ldots,N_b-1\}
\]
is a restricted polynomial frame on \(\Lambda\), rather than an orthogonal basis
on \(\Lambda\). Here \(N_b\) denotes the number of local frame modes.

For a coefficient vector \(c=(c_0,\ldots,c_{N_b-1})^T\), define the local
synthesis operator
\begin{equation}
L_{N_b}^{(T)}c(t)=\sum_{j=0}^{N_b-1}c_jP_j^{(T)}(t),
\qquad t\in[-1,1].
\label{eq:local_synthesis_operator}
\end{equation}
The restriction to \([-1,1]\) is essential: all local data are sampled on this
inner interval, while the orthogonality of the underlying functions comes from
the extended interval \([-T,T]\).

\subsection{Local rescaling and discrete data}
\label{subsec:local_rescaling_data}

Let \([0,b]\) be partitioned as
\[
0=x_0<x_1<\cdots<x_K=b,
\qquad I_k=[x_{k-1},x_k].
\]
For each subinterval, set
\[
c_k=\frac{x_{k-1}+x_k}{2},\qquad h_k=\frac{x_k-x_{k-1}}{2},
\qquad x=c_k+h_kt,\quad t\in[-1,1].
\]
The local function is
\[
g_k(t)=f(c_k+h_kt),\qquad t\in[-1,1].
\]
Let \(m\) be the number of local equispaced samples and define
\[
t_r=-1+\frac{2r}{m-1},\qquad r=0,\ldots,m-1.
\]
The physical sampling points are
\[
x_{k,r}=c_k+h_kt_r,
\qquad r=0,\ldots,m-1,
\]
and the local data vector is
\[
\mathbf g_k=(g_k(t_0),g_k(t_1),\ldots,g_k(t_{m-1}))^T.
\]
The normalized discrete frame matrix is
\begin{equation}
(A_{m,N_b}^{(T)})_{rj}=\frac{1}{\sqrt m}P_j^{(T)}(t_r),
\qquad r=0,\ldots,m-1,
\quad j=0,\ldots,N_b-1 .
\label{eq:discrete_frame_matrix}
\end{equation}
The coefficients on each subinterval are computed from
\begin{equation}
A_{m,N_b}^{(T)}c_k\approx \frac{1}{\sqrt m}\mathbf g_k .
\label{eq:local_discrete_ls}
\end{equation}
Since the same reference nodes and the same restricted frame are used on every
subinterval, the matrix \(A_{m,N_b}^{(T)}\) is independent of \(k\) and can be
factorized once.

\subsection{Exponentially weighted generalized inverse}
\label{subsec:weighted_ginv}

Although the data are exact in the present paper, direct inversion of the local
frame system may be unfavorable for derivative reconstruction. We therefore use
an exponentially weighted generalized inverse. Let
\[
R_\beta=\operatorname{diag}(r_0,r_1,\ldots,r_{N_b-1}),
\qquad r_j=e^{\beta j},\qquad \beta\ge0 .
\]
Introduce the weighted variable \(c=R_\beta^{-1}z\) and define
\[
G_\beta=A_{m,N_b}^{(T)}R_\beta^{-1}.
\]
Let
\[
G_\beta=U\Sigma V^T,
\qquad \Sigma=\operatorname{diag}(\sigma_0,\sigma_1,\ldots)
\]
be its singular value decomposition. With truncation threshold \(\eta>0\), the
computed coefficient vector is
\begin{equation}
c_{k,\eta}
=
R_\beta^{-1}
\sum_{\sigma_i>\eta}
\frac{\left\langle m^{-1/2}\mathbf g_k,u_i\right\rangle}{\sigma_i}v_i .
\label{eq:weighted_gtsvd_coeff}
\end{equation}
The local LLF approximation on \(I_k\) is therefore
\begin{equation}
p_{k,N_b}^{\eta}(x)
=
\sum_{j=0}^{N_b-1}c_{k,\eta,j}
P_j^{(T)}\left(\frac{x-c_k}{h_k}\right),
\qquad x\in I_k .
\label{eq:local_physical_llf_approx}
\end{equation}
Its derivative is
\begin{equation}
q_{N_b}^{\eta}(x)
=
\frac{d}{dx}p_{k,N_b}^{\eta}(x)
=
\frac{1}{h_k}
\sum_{j=0}^{N_b-1}c_{k,\eta,j}
\frac{d}{dt}P_j^{(T)}(t)
\bigg|_{t=(x-c_k)/h_k},
\qquad x\in I_k .
\label{eq:local_derivative_q}
\end{equation}
This is the derivative approximation used in the Caputo formula.

\begin{remark}
The coefficients \(c_{k,\eta}\) in \eqref{eq:weighted_gtsvd_coeff} are not the
ordinary Legendre projection coefficients of \(g_k\) on \([-1,1]\). They are the
regularized coefficients produced by the weighted generalized inverse of the
restricted frame matrix. Consequently, the analysis below is formulated in
terms of the actual reconstructed derivative \(q_{N_b}^{\eta}\), not in terms of
ordinary Legendre projection errors.
\end{remark}
\begin{remark}
In exact-data experiments with sufficiently smooth local functions, the
unweighted truncated inverse may already give nearly machine-precision
derivative values, because the relevant data components are mostly contained
in the stable singular subspace. In this regime, the numerical difference
between the unweighted and exponentially weighted reconstructions can be
small. The role of the exponential weight is therefore not merely to improve
the accuracy in all exact-data tests, but to select a smoother coefficient
vector in the ill-conditioned frame system. This becomes important for
derivative reconstruction when perturbations, roundoff errors, or unresolved
high-index components are present. For this reason, the present paper uses
the weighted formulation as a stability mechanism and as the framework under
which derivative and Caputo error estimates are obtained.
\end{remark}
\begin{remark}[Effect of localization]
The affine change of variables \(x=c_k+h_kt\) can improve local coefficient
decay. If \(f\) is analytic near \(I_k\) and the nearest complex singularity has
a distance of order \(d_k\) from the physical interval, then the mapped local
function \(g_k(t)=f(c_k+h_kt)\) sees a distance of order \(d_k/h_k\) in the
\(t\)-plane. Thus local Legendre-type coefficients may decay faster as the
subinterval size decreases. For oscillatory functions, for instance,
\(e^{i\omega x}\) becomes \(e^{i\omega c_k}e^{i(\omega h_k)t}\), so the effective
local frequency is \(\omega h_k\). This explains why subdivision improves the
weighted source behavior, although the GTSVD coefficients are still not
ordinary projection coefficients.
\end{remark}

\section{Caputo Differentiation Based on the Local Legendre Frame}
\label{sec:caputo_llf}

We now derive the exact-moment Caputo differentiation formula based on the
local derivative approximation \eqref{eq:local_derivative_q}. Throughout this
section, \(0<\alpha<1\).

\subsection{Local Caputo weights}
\label{subsec:local_caputo_weights}

For an evaluation point \(x_i\in(0,b]\), the proposed approximation is
\begin{equation}
\mathcal D_{N_b,\eta}^{\alpha}f(x_i)
=
\frac{1}{\Gamma(1-\alpha)}
\int_0^{x_i}(x_i-\xi)^{-\alpha}q_{N_b}^{\eta}(\xi)\,d\xi .
\label{eq:caputo_approx_q}
\end{equation}
Let \(I_k\) be a source interval contributing to this integral. Under the
change of variables \(\xi=c_k+h_kt\), the relevant part of the integral is
\[
\frac{h_k^{-\alpha}}{\Gamma(1-\alpha)}
\int_{t_L}^{t_U}
(\lambda_{i,k}-t)^{-\alpha}
\sum_{j=0}^{N_b-1}c_{k,\eta,j}(P_j^{(T)})'(t)\,dt,
\]
where
\[
\lambda_{i,k}=\frac{x_i-c_k}{h_k},
\]
and \([t_L,t_U]\subset[-1,1]\) is either a full source interval or a partial
current interval. Hence the local weight for mode \(j\) is
\begin{equation}
W_{i,k,j}^{(\alpha)}
=
\frac{h_k^{-\alpha}}{\Gamma(1-\alpha)}
\int_{t_L}^{t_U}
(\lambda_{i,k}-t)^{-\alpha}(P_j^{(T)})'(t)\,dt .
\label{eq:local_caputo_weight}
\end{equation}
Then
\[
\mathcal D_{N_b,\eta}^{\alpha}f(x_i)
=
\sum_{k}\sum_{j=0}^{N_b-1}W_{i,k,j}^{(\alpha)}c_{k,\eta,j},
\]
where the outer sum is over all local intervals lying in the history of
\(x_i\), including the partial interval containing \(x_i\).

\subsection{Exact moment representation}
\label{subsec:exact_moments}

Since \(P_j^{(T)}\) is a polynomial in \(t\), its derivative has the finite form
\[
(P_j^{(T)})'(t)=\sum_{r=0}^{j-1}d_{jr}^{(T)}t^r .
\]
Therefore the weight \eqref{eq:local_caputo_weight} can be written as
\begin{equation}
W_{i,k,j}^{(\alpha)}
=
\frac{h_k^{-\alpha}}{\Gamma(1-\alpha)}
\sum_{r=0}^{j-1}d_{jr}^{(T)}
J_r^{(\alpha)}(t_L,t_U;\lambda_{i,k}),
\label{eq:weight_moment_sum}
\end{equation}
where
\begin{equation}
J_r^{(\alpha)}(t_L,t_U;\lambda)
=
\int_{t_L}^{t_U}(\lambda-t)^{-\alpha}t^r\,dt .
\label{eq:basic_moment}
\end{equation}
These moments are finite and can be evaluated analytically. A stable expression
near the singular endpoint is obtained by setting
\[
\delta=\lambda-t_U,
\qquad L=t_U-t_L,
\qquad t=t_U-y .
\]
Then
\begin{equation}
J_r^{(\alpha)}(t_L,t_U;\lambda)
=
\sum_{s=0}^{r}\binom{r}{s}t_U^{r-s}(-1)^sH_s(\delta,L),
\label{eq:J_via_H}
\end{equation}
with
\[
H_s(\delta,L)=\int_0^L y^s(\delta+y)^{-\alpha}\,dy .
\]
If \(\delta=0\), then
\[
H_s(0,L)=\frac{L^{s+1-\alpha}}{s+1-\alpha}.
\]
If \(\delta>0\), then
\[
H_s(\delta,L)=
\sum_{r=0}^{s}\binom{s}{r}(-\delta)^{s-r}
\frac{(\delta+L)^{r+1-\alpha}-\delta^{r+1-\alpha}}{r+1-\alpha}.
\]
Thus the weak singularity is treated exactly at the level of local moments.

\subsection{Block-convolution form and complexity}
\label{subsec:complexity}

We now make explicit how the block-convolution structure arises from the
uniform partition. Assume that
\[
0=x_0<x_1<\cdots<x_K=b,\qquad H=x_k-x_{k-1}=\frac{b}{K},
\]
and set
\[
c_k=\frac{x_{k-1}+x_k}{2},\qquad h=\frac{H}{2}.
\]
On each block \(I_k=[x_{k-1},x_k]\), the local LLF approximation has
coefficients \(c_{k,j}\), \(j=0,\ldots,N_b-1\). We evaluate the Caputo
derivative at the same relative positions in each block:
\[
x_{k,\mu}=x_{k-1}+\rho_\mu H,\qquad
0<\rho_\mu\le 1,\qquad \mu=1,\ldots,s.
\]
It is convenient to introduce
\[
\theta_\mu=2\rho_\mu-1\in[-1,1].
\]
Thus \(x_{k,\mu}=c_k+h\theta_\mu\).

For the current block \(I_k\), only the part \([x_{k-1},x_{k,\mu}]\) contributes
to the Caputo integral. In the local variable \(t=(\xi-c_k)/h\), this gives the
current-block weights
\[
B_{\mu,j}^{(\alpha)}
=
\frac{h^{-\alpha}}{\Gamma(1-\alpha)}
\int_{-1}^{\theta_\mu}
(\theta_\mu-t)^{-\alpha}
\bigl(P_j^{(T)}\bigr)'(t)\,dt .
\]
These weights are independent of the block index \(k\), because all blocks have
the same length and the same evaluation pattern.

For a history block \(I_{k-d}\), \(d\ge 1\), the whole source block contributes.
For \(x_{k,\mu}=c_k+h\theta_\mu\) and \(\xi=c_{k-d}+ht\), we have
\[
\frac{x_{k,\mu}-c_{k-d}}{h}=2d+\theta_\mu .
\]
Equivalently, since \(\theta_\mu=2\rho_\mu-1\),
\[
\lambda_{d,\mu}=2d+2\rho_\mu-1 .
\]
Therefore the lag-\(d\) history weights are
\[
A_{d,\mu,j}^{(\alpha)}
=
\frac{h^{-\alpha}}{\Gamma(1-\alpha)}
\int_{-1}^{1}
(\lambda_{d,\mu}-t)^{-\alpha}
\bigl(P_j^{(T)}\bigr)'(t)\,dt ,
\qquad d=1,\ldots,K-1 .
\]
The important point is that \(A_{d,\mu,j}^{(\alpha)}\) depends on the target
and source blocks only through the lag \(d=k-\ell\), not through the absolute
indices \(k\) and \(\ell\). This is the source of the block-convolution
structure.

With these notations, the LLF-Caputo approximation at \(x_{k,\mu}\) can be
written as
\[
D_{N_b,\eta}^{\alpha}f(x_{k,\mu})
=
\sum_{j=0}^{N_b-1}B_{\mu,j}^{(\alpha)}c_{k,j}
+
\sum_{d=1}^{k-1}
\sum_{j=0}^{N_b-1}
A_{d,\mu,j}^{(\alpha)}c_{k-d,j}.
\]
The first term is the current-block contribution. The second term is the
history contribution. For each fixed pair \((\mu,j)\), it has the form of a
one-sided discrete convolution in the block index:
\[
\sum_{d=1}^{k-1} A_{d,\mu,j}^{(\alpha)}c_{k-d,j}.
\]
Thus the nonlocal Caputo history is converted into a collection of blockwise
lag convolutions.

The implementation consists of the following steps.

\begin{enumerate}
\item Compute the local LLF coefficients \(c_{k,j}\) on all blocks. Since the
same reference matrix is used on every block, the weighted GTSVD factorization
is formed only once and then reused.
\item Precompute the current weights \(B_{\mu,j}^{(\alpha)}\) and the lag
weights \(A_{d,\mu,j}^{(\alpha)}\). Each of these weights is evaluated by the
exact moment formulas in Section~\ref{subsec:exact_moments}.
\item For each target block \(k\), assemble the current contribution and the
history contribution using the formula above. The history part may be evaluated
directly by summation over \(d\), or by FFT-based convolution over the block
index for each fixed \((\mu,j)\).
\end{enumerate}

Let \(M\) denote the total number of unique local sampling points. In the
experiments,
\[
M=K(m-1)+1.
\]
The number of evaluation points is \(K s\), which is of the same order as \(M\)
when \(s\) is comparable to \(m\). The current weights require \(sN_b\) storage,
and the lag-dependent history weights require \((K-1)sN_b\) storage. Hence the
total storage for Caputo weights is
\[
O(KsN_b)=O(MN_b).
\]
For fixed local parameters \(s\) and \(N_b\), this is linear in the number of
samples. By contrast, a dense global differentiation matrix acting on all
evaluation and sample points would require \(O(M^2)\) storage.

If the history term is accumulated directly, the cost is
\[
O(K^2sN_b),
\]
because each target block sums over all previous source blocks. If the
lag-convolution structure is exploited by FFTs, then for each pair \((\mu,j)\)
one convolution over the block index is required. The total application cost is
therefore
\[
O(sN_bK\log K)=O(N_bM\log M).
\]
Thus, for fixed local parameters, the block-convolution implementation has
nearly linear complexity up to the logarithmic factor. The offline cost of
forming and factorizing the local weighted matrix is independent of \(K\), and
the precomputed moment weights are reused for all functions with the same
partition, evaluation pattern, and fractional order.

\begin{remark}
The complexity statements above are expressed in terms of the sample count
\(M\), since this is the most meaningful scale when comparing local equispaced
methods with global spectral or finite difference methods. The constants depend
on the fixed local parameters \(m\), \(s\), and \(N_b\), but these parameters are
small in the experiments.
\end{remark}
\section{Error Analysis}
\label{sec:error_analysis}

The analysis in this section separates the Caputo part from the local frame
reconstruction part. The proposed method does not use an ordinary Legendre
projection on \([-1,1]\). Instead, on each local reference interval
\(\Lambda=[-1,1]\), it uses the restricted Legendre frame
\[
P_j^{(T)}(t)=\frac{1}{\sqrt T}p_j\left(\frac{t}{T}\right),
\qquad t\in[-1,1],
\]
where \(P_j^{(T)}\) is orthonormal on the extended interval \([-T,T]\).
The local coefficients are therefore obtained from a regularized frame system,
rather than from a standard orthogonal projection.

The key point is that the Caputo differentiation error can be written exactly
as a fractional integral of the derivative reconstruction error. Thus the main
task is to estimate the derivative error produced by the local weighted GTSVD
reconstruction.

\subsection{Local weighted reconstruction and derivative error}
\label{subsec:local_weighted_derivative_error}

Let
\[
\mathcal V_{N_b}^{(T)}
=
\operatorname{span}
\left\{
P_0^{(T)}|_{\Lambda},
P_1^{(T)}|_{\Lambda},
\ldots,
P_{N_b-1}^{(T)}|_{\Lambda}
\right\}
\]
be the finite local Legendre frame space on \(\Lambda=[-1,1]\). For a
coefficient vector
\[
\mathbf c=(c_0,c_1,\ldots,c_{N_b-1})^T,
\]
define the synthesis operator
\[
L_{N_b}^{(T)}\mathbf c
=
\sum_{j=0}^{N_b-1}c_j P_j^{(T)}|_{\Lambda}.
\]
Let
\[
q_{N_b,\eta}
=
Q_{N_b,\eta}^{(T,R)}g
\in \mathcal V_{N_b}^{(T)}
\]
be the local weighted GTSVD reconstruction of \(g\), and denote the local
reconstruction error by
\[
e_{N_b,\eta}=g-q_{N_b,\eta}.
\]

The weighted GTSVD theory is used through the following two estimates:
\begin{equation}
\|e_{N_b,\eta}\|_{L^2(\Lambda)}
\le
\varepsilon_{N_b,\eta},
\label{eq:local_L2_error}
\end{equation}
and
\begin{equation}
\|e_{N_b,\eta}\|_{X_r(\Lambda)}
\le
M_r ,
\label{eq:local_weighted_smoothness_bound}
\end{equation}
where \(X_r(\Lambda)\) is a Hilbert-scale or coefficient-weighted smoothness
space associated with the weight matrix \(R\). In the algebraic case, one may
take \(X_r(\Lambda)=H^r(\Lambda)\). More generally, the norm
\(\|\cdot\|_{X_r(\Lambda)}\) should be understood as the function-space
counterpart of the weighted coefficient bound produced by the GTSVD
regularization.

The following interpolation estimate gives a derivative bound from
\eqref{eq:local_L2_error} and \eqref{eq:local_weighted_smoothness_bound}.

\begin{theorem}
\label{thm:local_derivative_from_weighted_gtsvd}
Let \(r>1\). Suppose that the local weighted GTSVD reconstruction error
\(e_{N_b,\eta}=g-q_{N_b,\eta}\) satisfies
\[
\|e_{N_b,\eta}\|_{L^2(\Lambda)}
\le
\varepsilon_{N_b,\eta},
\qquad
\|e_{N_b,\eta}\|_{H^r(\Lambda)}
\le
M_r .
\]
Then
\begin{equation}
\|g'-q_{N_b,\eta}'\|_{L^2(\Lambda)}
\le
C_r
M_r^{1/r}
\varepsilon_{N_b,\eta}^{(r-1)/r},
\label{eq:local_derivative_algebraic}
\end{equation}
where \(C_r\) is independent of \(\varepsilon_{N_b,\eta}\).
\end{theorem}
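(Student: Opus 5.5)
This bound is an interpolation inequality for Sobolev spaces on the bounded interval \(\Lambda=[-1,1]\), applied to \(e=e_{N_b,\eta}\). The derivative error is exactly \(e'\), since \(g'-q_{N_b,\eta}'=(g-q_{N_b,\eta})'=e'\). Hence
\[
\|g'-q_{N_b,\eta}'\|_{L^2(\Lambda)}=|e|_{H^1(\Lambda)}\le \|e\|_{H^1(\Lambda)}.
\]
It therefore suffices to prove, for every \(v\in H^r(\Lambda)\) with \(r>1\),
\[
\|v\|_{H^1(\Lambda)}\le C_r\,\|v\|_{L^2(\Lambda)}^{(r-1)/r}\,\|v\|_{H^r(\Lambda)}^{1/r},
\]
and then insert the two hypotheses. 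The exponents are increasing in both norms, so the bounds \(\varepsilon_{N_b,\eta}\) and \(M_r\) can replace the norms directly. The constant \(C_r\) depends only on \(r\) and \(\Lambda\), not on \(N_b\), \(\eta\), or \(g\).

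The interpolation inequality is classical. I will reduce it to the whole line by extension. Take a Stein-type (or Calderón) extension operator \(E\) for the interval. It is bounded simultaneously from \(L^2(\Lambda)\) to \(L^2(\mathbb R)\) and from \(H^r(\Lambda)\) to \(H^r(\mathbb R)\), with one operator for both spaces; this is the key requirement. On \(\mathbb R\), Plancherel gives
\[
\|w\|_{H^1}^2=\int(1+\xi^2)|\hat w|^2\,d\xi .
\]
Write \((1+\xi^2)=(1+\xi^2)^{\theta r}\cdot 1^{1-\theta}\) with \(\theta=1/r\), and apply Hölder with exponents \(1/\theta\) and \(1/(1-\theta)\). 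This yields
\[
\|w\|_{H^1}\le \|w\|_{L^2}^{1-1/r}\|w\|_{H^r}^{1/r}.
\]
Then
\[
\|v\|_{H^1(\Lambda)}\le\|Ev\|_{H^1(\mathbb R)}\le \|Ev\|_{L^2}^{(r-1)/r}\|Ev\|_{H^r}^{1/r}\le C_r\|v\|_{L^2(\Lambda)}^{(r-1)/r}\|v\|_{H^r(\Lambda)}^{1/r}.
\]

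Alternatively, \(H^1(\Lambda)=[L^2(\Lambda),H^r(\Lambda)]_{1/r}\) by complex interpolation, and the interpolation-space inequality gives the result immediately. For integer \(r\), one can instead iterate Gagliardo–Nirenberg estimates of the form \(\|v'\|\le C(\|v\|^{1/2}\|v''\|^{1/2}+\|v\|)\). The additive lower-order term \(\|v\|_{L^2}\) that appears there is absorbed using \(\|v\|_{L^2}\le\|v\|_{L^2}^{(r-1)/r}\|v\|_{H^r}^{1/r}\), which holds because \(\|v\|_{L^2}\le\|v\|_{H^r}\).

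The main obstacle is the bounded domain, specifically for non-integer \(r\). The simultaneous boundedness of a single extension operator on \(L^2\) and \(H^r\) is what makes the multiplicative form hold without an extra additive term. I will cite Stein's universal extension theorem, or equivalently the identification of \(H^r(\Lambda)\) as the restriction of \(H^r(\mathbb R)\) with the quotient norm, to settle it. Everything else is bookkeeping. The role of the GTSVD is only to supply the hypotheses \(\varepsilon_{N_b,\eta}\) and \(M_r\); no frame structure is used in this step.
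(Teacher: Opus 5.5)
Your proposal is correct and follows the paper's proof. Both use \(g'-q_{N_b,\eta}'=e_{N_b,\eta}'\) to bound the derivative error by \(\|e_{N_b,\eta}\|_{H^1(\Lambda)}\), apply \(\|e\|_{H^1}\le C_r\|e\|_{L^2}^{1-1/r}\|e\|_{H^r}^{1/r}\), and insert the two hypotheses. The paper only invokes that inequality, whereas your justification via a single Stein extension bounded on both \(L^2\) and \(H^r\), followed by Plancherel and H\"older, is sound. The quotient-norm characterization of \(H^r(\Lambda)\) alone would not give the simultaneous \(L^2\) control, so it is Stein's theorem that does the work, not an equivalent alternative.
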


\proof
Since
\[
g'-q_{N_b,\eta}'=e_{N_b,\eta}',
\]
we have
\[
\|g'-q_{N_b,\eta}'\|_{L^2(\Lambda)}
\le
\|e_{N_b,\eta}\|_{H^1(\Lambda)} .
\]
By the interpolation inequality between \(L^2(\Lambda)\) and
\(H^r(\Lambda)\),
\[
\|e_{N_b,\eta}\|_{H^1(\Lambda)}
\le
C_r
\|e_{N_b,\eta}\|_{L^2(\Lambda)}^{1-1/r}
\|e_{N_b,\eta}\|_{H^r(\Lambda)}^{1/r}.
\]
Using the two assumptions gives
\[
\|g'-q_{N_b,\eta}'\|_{L^2(\Lambda)}
\le
C_r
\varepsilon_{N_b,\eta}^{1-1/r}
M_r^{1/r},
\]
which proves the result.
\proofend
\smallskip

This theorem shows that if the GTSVD reconstruction is accurate in \(L^2\)
and remains bounded in a weighted smoothness norm, then the derivative error
also converges. In particular, for algebraic weights corresponding to
\(H^r\)-regularity, the first derivative error behaves like
\[
O\left(\varepsilon_{N_b,\eta}^{(r-1)/r}\right).
\]

\subsection{Exponential weights and analytic local functions}
\label{subsec:exponential_weight_derivative_error}

For analytic functions, the coefficient decay is usually exponential. This is
the case in which the exponentially weighted GTSVD reconstruction is most
relevant.

Assume that the weighted error is controlled by an exponential weight. In
coefficient form, this means that the error expansion satisfies a bound of the
form
\begin{equation}
\sum_{j\ge0}e^{2\beta j}|\widehat e_j|^2
\le
M_\beta^2,
\label{eq:exponential_weight_bound}
\end{equation}
for some \(\beta>0\), where \(\widehat e_j\) denotes the coefficient of the
local error in a compatible polynomial or frame representation. Since the
exponential weight dominates all algebraic weights, for every \(r>0\) there
exists a constant \(C_{r,\beta}\) such that
\begin{equation}
\|e_{N_b,\eta}\|_{H^r(\Lambda)}
\le
C_{r,\beta}M_\beta .
\label{eq:exp_weight_controls_Hr}
\end{equation}
Combining this with Theorem~\ref{thm:local_derivative_from_weighted_gtsvd}
gives, for every \(r>1\),
\begin{equation}
\|g'-q_{N_b,\eta}'\|_{L^2(\Lambda)}
\le
C_{r,\beta}
M_\beta^{1/r}
\varepsilon_{N_b,\eta}^{(r-1)/r}.
\label{eq:local_derivative_exp_weight_general}
\end{equation}

If, in addition, the local \(L^2\) reconstruction error satisfies
\begin{equation}
\varepsilon_{N_b,\eta}
\le
C e^{-\sigma N_b},
\qquad \sigma>0,
\label{eq:local_L2_exponential_decay}
\end{equation}
then \eqref{eq:local_derivative_exp_weight_general} implies
\[
\|g'-q_{N_b,\eta}'\|_{L^2(\Lambda)}
\le
C_{r,\beta}
e^{-\sigma(1-1/r)N_b}.
\]
Since \(r>1\) can be chosen arbitrarily large when the exponential weighted
bound \eqref{eq:exponential_weight_bound} holds, we obtain the following
near-exponential derivative estimate.

\begin{corollary}
\label{cor:local_derivative_exponential}
Suppose that the local weighted GTSVD reconstruction satisfies
\[
\|g-q_{N_b,\eta}\|_{L^2(\Lambda)}
\le
C e^{-\sigma N_b},
\]
and that the exponential weighted bound
\eqref{eq:exponential_weight_bound} holds. Then, for every
\(0<\sigma'<\sigma\), there exists a constant \(C_{\sigma'}\), independent of
\(N_b\), such that
\begin{equation}
\|g'-q_{N_b,\eta}'\|_{L^2(\Lambda)}
\le
C_{\sigma'} e^{-\sigma' N_b}.
\label{eq:local_derivative_exponential}
\end{equation}
\end{corollary}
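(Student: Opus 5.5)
The idea is to feed the two hypotheses into Theorem~\ref{thm:local_derivative_from_weighted_gtsvd} with a fixed, well-chosen $r$. Given $0<\sigma'<\sigma$, I will choose $r>1$ so that $\sigma(1-1/r)\ge\sigma'$, which means
\[
r\ge \frac{\sigma}{\sigma-\sigma'}>1 .
\]
This $r$ depends only on $\sigma$ and $\sigma'$, and not on $N_b$.

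With this $r$ fixed, I first use the exponential weighted bound \eqref{eq:exponential_weight_bound} and invoke \eqref{eq:exp_weight_controls_Hr}. This gives
\[
\|e_{N_b,\eta}\|_{H^r(\Lambda)}\le C_{r,\beta}M_\beta=:M_r .
\]
So the smoothness hypothesis of Theorem~\ref{thm:local_derivative_from_weighted_gtsvd} holds, and the $L^2$ hypothesis holds with $\varepsilon_{N_b,\eta}=Ce^{-\sigma N_b}$. The theorem then yields
\[
\|g'-q_{N_b,\eta}'\|_{L^2(\Lambda)}
\le C_r\,(C_{r,\beta}M_\beta)^{1/r}\,C^{(r-1)/r}e^{-\sigma(1-1/r)N_b}
\le C_{\sigma'}e^{-\sigma' N_b}.
\]
Here
\[
C_{\sigma'}:=C_r(C_{r,\beta}M_\beta)^{1/r}\max(1,C),
\]
and the last inequality uses $e^{-\sigma(1-1/r)N_b}\le e^{-\sigma' N_b}$ for $N_b\ge0$.

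The main obstacle is making sure that $M_\beta$, and hence $M_r$, is independent of $N_b$. The corollary implicitly requires this: the weighted bound \eqref{eq:exponential_weight_bound} must hold uniformly in $N_b$ and $\eta$. I would state this uniformity explicitly as part of the interpretation of the hypothesis.

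I would also check that $C_{r,\beta}$ in \eqref{eq:exp_weight_controls_Hr} depends only on $r$, $\beta$ and $T$. The argument for this runs through the polynomial representation. First, $\|P_j^{(T)}\|_{H^r(\Lambda)}$ grows at most algebraically in $j$ (like $j^{2r+1/2}$, by Markov-type inequalities). Second, by Cauchy--Schwarz, $\sum_j j^{2r+1/2}|\widehat e_j|\le (\sum_j j^{4r+1}e^{-2\beta j})^{1/2}M_\beta$, and the series converges since $\beta>0$. If the error expansion has a tail outside the frame span, the same estimate applies in any compatible polynomial basis, and this is exactly the meaning assigned to \eqref{eq:exponential_weight_bound}.

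Once the uniformity is granted, $C_r$ from the interpolation inequality depends only on $r$ and $\Lambda$. The resulting constant $C_{\sigma'}$ is therefore independent of $N_b$, which completes the argument. As $\sigma'\to\sigma$ the required $r\to\infty$, so $C_{\sigma'}$ may blow up; this is why the statement only claims the bound for $\sigma'<\sigma$.
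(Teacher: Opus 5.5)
Your proposal is correct and follows the paper's proof: you fix \(r>1\) with \(\sigma(1-1/r)\ge\sigma'\), combine \eqref{eq:exp_weight_controls_Hr} with Theorem~\ref{thm:local_derivative_from_weighted_gtsvd}, and absorb the \(N_b\)-independent factors into \(C_{\sigma'}\). Your remark that \(M_\beta\), and hence the \(H^r\) bound, must be uniform in \(N_b\) states explicitly an assumption the paper uses only implicitly when it absorbs \(M_\beta^{1/r}\) into \(C_{r,\beta}\).
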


\proof
Choose \(r>1\) sufficiently large such that
\[
\sigma\left(1-\frac1r\right)>\sigma' .
\]
Then \eqref{eq:local_derivative_exp_weight_general} and
\eqref{eq:local_L2_exponential_decay} give
\[
\|g'-q_{N_b,\eta}'\|_{L^2(\Lambda)}
\le
C_{r,\beta}e^{-\sigma(1-1/r)N_b}
\le
C_{\sigma'}e^{-\sigma' N_b}.
\]
\proofend
\smallskip

\begin{remark}
The above result explains why the derivative error observed in the numerical
experiments decays rapidly for analytic functions. The argument does not
identify the GTSVD coefficients with ordinary Legendre projection coefficients.
Rather, it uses the two characteristic outputs of the weighted GTSVD
regularization: small \(L^2\) reconstruction error and bounded weighted
smoothness of the reconstruction error.
\end{remark}

\subsection{Transfer to the physical interval}
\label{subsec:physical_derivative_error}

We now transfer the local derivative estimate from the reference interval to
the physical interval. Let
\[
I_k=[a_{k-1},a_k],
\qquad
x=c_k+s_kt,
\qquad
t\in[-1,1],
\]
where
\[
c_k=\frac{a_{k-1}+a_k}{2},
\qquad
s_k=\frac{a_k-a_{k-1}}{2}.
\]
The local reference function is
\[
g_k(t)=f(c_k+s_kt).
\]
Let
\[
q_{k,N_b,\eta}(t)
=
Q_{N_b,\eta}^{(T,R)}g_k(t)
\]
be the local weighted GTSVD reconstruction of \(g_k\). The corresponding
physical reconstruction is
\[
p_{k,N_b,\eta}(x)
=
q_{k,N_b,\eta}
\left(\frac{x-c_k}{s_k}\right),
\qquad x\in I_k .
\]
The piecewise derivative approximation is
\[
q_{N_b}^{\eta}(x)
=
p_{k,N_b,\eta}'(x),
\qquad x\in I_k .
\]
Since
\[
p_{k,N_b,\eta}'(x)
=
\frac1{s_k}q_{k,N_b,\eta}'(t),
\qquad
g_k'(t)
=
s_k f'(c_k+s_kt),
\]
we obtain the exact scaling relation
\begin{equation}
\|f'-q_{N_b}^{\eta}\|_{L^2(I_k)}
=
s_k^{-1/2}
\|g_k'-q_{k,N_b,\eta}'\|_{L^2(\Lambda)} .
\label{eq:physical_derivative_scaling}
\end{equation}
Therefore,
\begin{equation}
\|f'-q_{N_b}^{\eta}\|_{L^2(0,b)}^2
=
\sum_{k=1}^{K}
s_k^{-1}
\|g_k'-q_{k,N_b,\eta}'\|_{L^2(\Lambda)}^2 .
\label{eq:global_derivative_scaling}
\end{equation}

Using Theorem~\ref{thm:local_derivative_from_weighted_gtsvd}, we have the
following algebraic-weight estimate.

\begin{corollary}
\label{cor:global_derivative_algebraic}
Assume that, for each local interval,
\[
\|g_k-q_{k,N_b,\eta}\|_{L^2(\Lambda)}
\le
\varepsilon_{k,N_b,\eta},
\qquad
\|g_k-q_{k,N_b,\eta}\|_{H^r(\Lambda)}
\le
M_{k,r},
\qquad r>1 .
\]
Then
\begin{equation}
\|f'-q_{N_b}^{\eta}\|_{L^2(0,b)}
\le
C_r
\left(
\sum_{k=1}^{K}
s_k^{-1}
M_{k,r}^{2/r}
\varepsilon_{k,N_b,\eta}^{2(r-1)/r}
\right)^{1/2}.
\label{eq:global_derivative_algebraic}
\end{equation}
\end{corollary}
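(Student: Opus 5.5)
The plan is to combine the exact scaling identity \eqref{eq:global_derivative_scaling} with Theorem~\ref{thm:local_derivative_from_weighted_gtsvd}, applied separately on each local interval. Nothing beyond these two ingredients should be needed. The only point to watch is that the constant $C_r$ is uniform in $k$.

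\emph{Step 1 (global identity).} Start from \eqref{eq:global_derivative_scaling}:
\[
\|f'-q_{N_b}^{\eta}\|_{L^2(0,b)}^2=\sum_{k=1}^K s_k^{-1}\|g_k'-q_{k,N_b,\eta}'\|_{L^2(\Lambda)}^2 .
\]
This identity holds because the intervals $I_k$ cover $[0,b]$ and overlap only at endpoints, which have measure zero. On each $I_k$, the change of variables $x=c_k+s_kt$ gives the factor $s_k^{-1}$ recorded in \eqref{eq:physical_derivative_scaling}.

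\emph{Step 2 (local estimate).} For each fixed $k$, apply Theorem~\ref{thm:local_derivative_from_weighted_gtsvd} with $g=g_k$, $\varepsilon_{N_b,\eta}=\varepsilon_{k,N_b,\eta}$ and $M_r=M_{k,r}$. This gives
\[
\|g_k'-q_{k,N_b,\eta}'\|_{L^2(\Lambda)}\le C_r M_{k,r}^{1/r}\varepsilon_{k,N_b,\eta}^{(r-1)/r}.
\]
The constant $C_r$ comes only from the interpolation inequality between $L^2(\Lambda)$ and $H^r(\Lambda)$ on the fixed reference interval $\Lambda=[-1,1]$. It therefore depends only on $r$ and $\Lambda$. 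It is the same constant for every $k$, and it does not depend on $s_k$, $K$, or the function. Making this uniformity explicit is the one point needing care; it is why the estimates are posed on $\Lambda$ rather than on $I_k$.

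\emph{Step 3 (summation).} Square the local bound and multiply by $s_k^{-1}$:
\[
s_k^{-1}\|g_k'-q_{k,N_b,\eta}'\|^2_{L^2(\Lambda)}\le C_r^2\, s_k^{-1} M_{k,r}^{2/r}\varepsilon_{k,N_b,\eta}^{2(r-1)/r}.
\]
Summing over $k=1,\ldots,K$, factoring out $C_r^2$, and taking square roots yields \eqref{eq:global_derivative_algebraic}.

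I expect no real obstacle. The only subtlety is the uniformity of $C_r$ across intervals, and this is guaranteed because every local problem is posed on the same reference interval $\Lambda$.
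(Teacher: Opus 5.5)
Your proposal is correct and follows the paper's own route: the paper obtains this corollary by inserting the local bound of Theorem~\ref{thm:local_derivative_from_weighted_gtsvd} for each $g_k$ into the exact scaling identity \eqref{eq:global_derivative_scaling} and summing over $k$. Your observation that $C_r$ is uniform in $k$, because every local estimate is posed on the same reference interval $\Lambda$, makes explicit a point the paper leaves implicit.
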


Similarly, using Corollary~\ref{cor:local_derivative_exponential}, we obtain
the exponential-weight estimate.

\begin{corollary}
\label{cor:global_derivative_exponential}
Assume that, for each local interval,
\[
\|g_k-q_{k,N_b,\eta}\|_{L^2(\Lambda)}
\le
C_k e^{-\sigma_k N_b},
\]
and that an exponential weighted bound of the form
\eqref{eq:exponential_weight_bound} holds. Let
\[
\sigma_*=\min_{1\le k\le K}\sigma_k .
\]
Then, for every \(0<\sigma'<\sigma_*\), there exists a constant
\(C_{\sigma'}\), independent of \(N_b\), such that
\begin{equation}
\|f'-q_{N_b}^{\eta}\|_{L^2(0,b)}
\le
C_{\sigma'}
\left(
\sum_{k=1}^{K}s_k^{-1}
\right)^{1/2}
e^{-\sigma' N_b}.
\label{eq:global_derivative_exponential}
\end{equation}
\end{corollary}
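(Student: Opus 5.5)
The plan is to combine the exact scaling identity \eqref{eq:global_derivative_scaling} with the local exponential estimate of Corollary~\ref{cor:local_derivative_exponential}, applied on each subinterval separately. The only real work is making sure that the constants and rates coming from the $K$ local applications can be made uniform in $k$, so that a single $C_{\sigma'}$ and a single rate $\sigma'$ can be factored out of the sum.

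\emph{Step 1: the local estimates.} Fix $0<\sigma'<\sigma_*$. For each $k$ we have $\sigma'<\sigma_*\le\sigma_k$. The local $L^2$ bound $\|g_k-q_{k,N_b,\eta}\|_{L^2(\Lambda)}\le C_k e^{-\sigma_k N_b}$ together with the exponential weighted bound \eqref{eq:exponential_weight_bound} (with some constant $M_{\beta,k}$) are exactly the hypotheses of Corollary~\ref{cor:local_derivative_exponential}. That corollary therefore gives, for every $k$,
\[
\|g_k'-q_{k,N_b,\eta}'\|_{L^2(\Lambda)}\le C_{k,\sigma'}e^{-\sigma' N_b}.
\]

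\emph{Step 2: uniformity of the constants.} To keep the constants under control, I would revisit the proof of Corollary~\ref{cor:local_derivative_exponential} rather than cite it as a black box. Choose one $r>1$ with $\sigma_*(1-1/r)>\sigma'$. This same $r$ then works for every $k$, since $\sigma_k(1-1/r)\ge\sigma_*(1-1/r)>\sigma'$. By \eqref{eq:local_derivative_exp_weight_general} the local constant is then of the form
\[
C_{r,\beta}M_{\beta,k}^{1/r}C_k^{(r-1)/r}.
\]
Since there are finitely many blocks, $K$ is fixed and independent of $N_b$. Taking the maximum over $k$ gives a single constant $C_{\sigma'}$ independent of $N_b$. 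Here the weight parameter $\beta$, and hence $C_{r,\beta}$, is common to all blocks, because the same $R_\beta$ is used on every block.

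\emph{Step 3: assembly.} Substituting the uniform local bound into \eqref{eq:global_derivative_scaling} gives
\[
\|f'-q_{N_b}^{\eta}\|_{L^2(0,b)}^2\le C_{\sigma'}^2e^{-2\sigma'N_b}\sum_{k=1}^K s_k^{-1}.
\]
Taking square roots yields \eqref{eq:global_derivative_exponential}.

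The main obstacle is Step 2, namely the uniformity of the constants. If one simply cites the corollary, the constant depends on $k$ through $C_k$, $M_{\beta,k}$, and the choice of $r$. I would handle this by fixing $r$ from $\sigma_*$, as above, and then taking a maximum over the finitely many blocks. I would also note explicitly that the resulting constant may depend on $K$ and on the block data, but not on $N_b$, which is all the statement requires.
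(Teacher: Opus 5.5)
Your proposal is correct and follows the same route as the paper. The paper gives no separate proof and only says the result follows ``similarly'': apply Corollary~\ref{cor:local_derivative_exponential} on each block, then substitute the local bounds into the exact scaling identity \eqref{eq:global_derivative_scaling}. Your Step~2 fixes a single $r$ from $\sigma_*$ and takes the maximum of $C_{r,\beta}M_{\beta,k}^{1/r}C_k^{(r-1)/r}$ over the finitely many blocks. This makes explicit the uniformity in $k$ that the paper leaves implicit: the constant depends on $K$ and the block data, but not on $N_b$, which is all the statement requires.
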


\begin{remark}
The factor \(s_k^{-1/2}\) in
\eqref{eq:physical_derivative_scaling} is the usual scaling factor for first
derivatives under the affine change of variables. At the same time,
localization improves the local approximability of smooth functions. For
example, the oscillatory factor \(e^{i\omega x}\) becomes
\[
e^{i\omega(c_k+s_kt)}
=
e^{i\omega c_k}e^{i(\omega s_k)t},
\]
so the effective local frequency is \(\omega s_k\). Thus the local
coefficient decay is often much faster than that of a global expansion, which
makes the exponential weighted source condition easier to satisfy on each
subinterval.
\end{remark}

\subsection{Caputo error identity and stability estimate}
\label{subsec:caputo_error_identity}

Assume that \(f\) is absolutely continuous on \([0,b]\). From
\eqref{eq:caputo_def_intro} and \eqref{eq:caputo_approx_q},
\begin{equation}
{}^C D_{0+}^{\alpha}f(x)-\mathcal D_{N_b,\eta}^{\alpha}f(x)
=
\frac{1}{\Gamma(1-\alpha)}
\int_0^x
(x-\xi)^{-\alpha}
\left(f'(\xi)-q_{N_b}^{\eta}(\xi)\right)\,d\xi .
\label{eq:caputo_error_identity}
\end{equation}
Thus the weakly singular Caputo kernel acts on the derivative reconstruction
error.

\begin{theorem}
\label{thm:caputo_stability}
Let \(0<\alpha<1\) and let \(q_{N_b}^{\eta}\in L^2(0,b)\). Then
\begin{equation}
\left\|
{}^C D_{0+}^{\alpha}f-\mathcal D_{N_b,\eta}^{\alpha}f
\right\|_{L^2(0,b)}
\le
\frac{b^{1-\alpha}}{\Gamma(2-\alpha)}
\left\|
f'-q_{N_b}^{\eta}
\right\|_{L^2(0,b)} .
\label{eq:caputo_l2_stability}
\end{equation}
If \(f'-q_{N_b}^{\eta}\in L^\infty(0,b)\), then
\begin{equation}
\left\|
{}^C D_{0+}^{\alpha}f-\mathcal D_{N_b,\eta}^{\alpha}f
\right\|_{L^\infty(0,b)}
\le
\frac{b^{1-\alpha}}{\Gamma(2-\alpha)}
\left\|
f'-q_{N_b}^{\eta}
\right\|_{L^\infty(0,b)} .
\label{eq:caputo_linf_stability}
\end{equation}
\end{theorem}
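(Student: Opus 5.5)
The plan starts from the error identity \eqref{eq:caputo_error_identity}. It writes the Caputo error as a Riemann--Liouville fractional integral of order \(1-\alpha\) applied to \(e:=f'-q_{N_b}^{\eta}\):
\[
{}^C D_{0+}^{\alpha}f(x)-\mathcal D_{N_b,\eta}^{\alpha}f(x)=(k*e)(x),
\qquad
k(y)=\frac{y^{-\alpha}}{\Gamma(1-\alpha)}\,\mathbf 1_{(0,b)}(y),
\]
with \(e\) extended by zero outside \((0,b)\). Both estimates then follow from Young's convolution inequality with the \(L^1\) norm of the kernel. That norm is computed directly:
\[
\|k\|_{L^1(0,b)}=\frac{1}{\Gamma(1-\alpha)}\int_0^b y^{-\alpha}\,dy=\frac{b^{1-\alpha}}{(1-\alpha)\Gamma(1-\alpha)}=\frac{b^{1-\alpha}}{\Gamma(2-\alpha)} .
\]
This is exactly the constant in \eqref{eq:caputo_l2_stability} and \eqref{eq:caputo_linf_stability}.

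For the \(L^\infty\) bound I will argue pointwise. For every \(x\in(0,b]\),
\[
|(k*e)(x)|\le \|e\|_{L^\infty}\frac{1}{\Gamma(1-\alpha)}\int_0^x (x-\xi)^{-\alpha}\,d\xi
=\|e\|_{L^\infty}\frac{x^{1-\alpha}}{\Gamma(2-\alpha)}
\le \frac{b^{1-\alpha}}{\Gamma(2-\alpha)}\|e\|_{L^\infty},
\]
using \(x\le b\).

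For the \(L^2\) bound I will avoid quoting Young and use a Schur test (Cauchy--Schwarz with the kernel split as \(|k|^{1/2}\cdot|k|^{1/2}\)). First,
\[
|(k*e)(x)|^2\le\Big(\int_0^x k(x-\xi)\,d\xi\Big)\Big(\int_0^x k(x-\xi)|e(\xi)|^2\,d\xi\Big)\le \|k\|_{L^1}\int_0^x k(x-\xi)|e(\xi)|^2\,d\xi .
\]
Next I integrate in \(x\) over \((0,b)\) and swap the order of integration by Tonelli, which is legitimate since the integrand is nonnegative. The inner integral becomes \(\int_\xi^b k(x-\xi)\,dx\le\|k\|_{L^1}\), so
\[
\|k*e\|_{L^2}^2\le \|k\|_{L^1}^2\|e\|_{L^2}^2 .
\]
Taking square roots gives \eqref{eq:caputo_l2_stability}.

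The main technical point is justification rather than estimation. The identity \eqref{eq:caputo_error_identity} needs \(f'\in L^1\), which holds because \(f\) is absolutely continuous. The hypothesis \(e\in L^2\) then implicitly requires \(f'\in L^2\), and the resulting bound makes the right-hand side finite, so the convolution is well defined almost everywhere. Measurability and the use of Tonelli are standard because the kernel is nonnegative and integrable.
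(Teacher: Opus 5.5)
Your proof is correct and follows the paper's argument. The \(L^\infty\) estimate uses the same pointwise bound \(x^{1-\alpha}/\Gamma(2-\alpha)\le b^{1-\alpha}/\Gamma(2-\alpha)\), and the \(L^2\) estimate rests on the same kernel norm \(\|\kappa_\alpha\|_{L^1(0,b)}=b^{1-\alpha}/\Gamma(2-\alpha)\). Your Cauchy--Schwarz/Tonelli (Schur-test) computation is just the standard proof of the Young inequality \(\|\kappa_\alpha*v\|_{L^2}\le\|\kappa_\alpha\|_{L^1}\|v\|_{L^2}\) that the paper quotes, so the only difference is that your version is self-contained.
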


\proof
For each \(x\in[0,b]\), \eqref{eq:caputo_error_identity} gives
\[
\left|
{}^C D_{0+}^{\alpha}f(x)-\mathcal D_{N_b,\eta}^{\alpha}f(x)
\right|
\le
\frac{1}{\Gamma(1-\alpha)}
\int_0^x
(x-\xi)^{-\alpha}
\left|
f'(\xi)-q_{N_b}^{\eta}(\xi)
\right|\,d\xi .
\]
Hence
\[
\left|
{}^C D_{0+}^{\alpha}f(x)-\mathcal D_{N_b,\eta}^{\alpha}f(x)
\right|
\le
\frac{x^{1-\alpha}}{\Gamma(2-\alpha)}
\left\|
f'-q_{N_b}^{\eta}
\right\|_{L^\infty(0,x)} ,
\]
which proves \eqref{eq:caputo_linf_stability}. The \(L^2\) estimate follows
from Young's inequality for convolution. Indeed, the kernel
\[
\kappa_\alpha(t)=\frac{t^{-\alpha}}{\Gamma(1-\alpha)},
\qquad 0<t<b,
\]
satisfies
\[
\|\kappa_\alpha\|_{L^1(0,b)}
=
\frac{b^{1-\alpha}}{\Gamma(2-\alpha)} .
\]
Therefore,
\[
\|I^{1-\alpha}v\|_{L^2(0,b)}
\le
\frac{b^{1-\alpha}}{\Gamma(2-\alpha)}
\|v\|_{L^2(0,b)} .
\]
Taking
\[
v=f'-q_{N_b}^{\eta}
\]
proves \eqref{eq:caputo_l2_stability}.
\proofend
\smallskip

\subsection{Caputo convergence estimates}
\label{subsec:caputo_convergence_estimates}

Combining Theorem~\ref{thm:caputo_stability} with
Corollary~\ref{cor:global_derivative_algebraic}, we obtain the following
result for algebraic weights.

\begin{theorem}
\label{thm:caputo_algebraic_weight_error}
Let \(0<\alpha<1\). Assume that, for each local interval,
\[
\|g_k-q_{k,N_b,\eta}\|_{L^2(\Lambda)}
\le
\varepsilon_{k,N_b,\eta},
\qquad
\|g_k-q_{k,N_b,\eta}\|_{H^r(\Lambda)}
\le
M_{k,r},
\qquad r>1 .
\]
Then
\begin{equation}
\left\|
{}^C D_{0+}^{\alpha}f-\mathcal D_{N_b,\eta}^{\alpha}f
\right\|_{L^2(0,b)}
\le
\frac{b^{1-\alpha}}{\Gamma(2-\alpha)}
C_r
\left(
\sum_{k=1}^{K}
s_k^{-1}
M_{k,r}^{2/r}
\varepsilon_{k,N_b,\eta}^{2(r-1)/r}
\right)^{1/2}.
\label{eq:caputo_algebraic_weight_error}
\end{equation}
\end{theorem}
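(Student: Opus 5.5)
The plan is to chain the Caputo stability estimate with the global derivative estimate; nothing new is needed beyond checking that the hypotheses match. First, I will apply Theorem~\ref{thm:caputo_stability}, specifically the $L^2$ bound \eqref{eq:caputo_l2_stability}. This reduces the Caputo error to
\[
\frac{b^{1-\alpha}}{\Gamma(2-\alpha)}\,\|f'-q_{N_b}^{\eta}\|_{L^2(0,b)} .
\]
The bound rests on the error identity \eqref{eq:caputo_error_identity}, which requires $f$ absolutely continuous, and on Young's inequality with the kernel $\kappa_\alpha$, whose $L^1(0,b)$ norm is $b^{1-\alpha}/\Gamma(2-\alpha)$. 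For \eqref{eq:caputo_l2_stability} to apply I must check that $q_{N_b}^{\eta}\in L^2(0,b)$ and that $f'-q_{N_b}^{\eta}\in L^2(0,b)$.

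\begin{itemize}
\item On each $I_k$, $q_{N_b}^{\eta}$ is a polynomial, so $q_{N_b}^{\eta}\in L^2(0,b)$ holds automatically.
\item The hypothesis $g_k-q_{k,N_b,\eta}\in H^r(\Lambda)$ with $r>1$, together with $q_{k,N_b,\eta}$ being polynomial, gives $g_k\in H^1(\Lambda)$. Hence $f'\in L^2(I_k)$ by the scaling relation.
\end{itemize}

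Second, I will insert the piecewise derivative bound from Corollary~\ref{cor:global_derivative_algebraic}. If that corollary is to be justified rather than just quoted, the steps are as follows. Apply Theorem~\ref{thm:local_derivative_from_weighted_gtsvd} on each reference interval with $\varepsilon_{N_b,\eta}=\varepsilon_{k,N_b,\eta}$ and $M_r=M_{k,r}$. This yields
\[
\|g_k'-q_{k,N_b,\eta}'\|_{L^2(\Lambda)}\le C_r M_{k,r}^{1/r}\varepsilon_{k,N_b,\eta}^{(r-1)/r}.
\]
The constant $C_r$ is uniform in $k$, since it comes only from the interpolation inequality on the fixed interval $\Lambda$. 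Squaring this bound, multiplying by $s_k^{-1}$, and summing via \eqref{eq:global_derivative_scaling} gives \eqref{eq:global_derivative_algebraic}.

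Multiplying \eqref{eq:global_derivative_algebraic} by $b^{1-\alpha}/\Gamma(2-\alpha)$ then yields \eqref{eq:caputo_algebraic_weight_error} directly.

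The only genuine subtlety, and the step I would check most carefully, is the consistency between the two local parametrizations. Section~\ref{sec:caputo_llf} uses $h_k$, while this section uses $s_k$; these coincide, both being the half-length of $I_k$. I also need to confirm that the piecewise function $q_{N_b}^{\eta}$ in \eqref{eq:caputo_approx_q} is exactly the one in Section~\ref{subsec:physical_derivative_error}, so that the identity \eqref{eq:caputo_error_identity} refers to the same error function whose $L^2$ norm is bounded. Once these are identified, the proof is a two-line composition of the earlier results.
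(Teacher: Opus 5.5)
Your proposal is correct and matches the paper's argument: the paper proves this theorem in one line by combining the $L^2$ bound of Theorem~\ref{thm:caputo_stability} with Corollary~\ref{cor:global_derivative_algebraic}, and that corollary is itself Theorem~\ref{thm:local_derivative_from_weighted_gtsvd} summed through the scaling identity \eqref{eq:global_derivative_scaling}. Your additional checks are sound and only make explicit what the paper leaves implicit, namely the uniformity of $C_r$ in $k$, the $L^2$ membership of $f'-q_{N_b}^{\eta}$, and the identification $h_k=s_k$ together with the matching definitions of $q_{N_b}^{\eta}$.
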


For analytic local functions and exponential weights, the Caputo approximation
inherits exponential convergence from the local weighted GTSVD reconstruction.

\begin{theorem}
\label{thm:caputo_exponential_weight_error}
Let \(0<\alpha<1\). Assume that, for each local interval,
\[
\|g_k-q_{k,N_b,\eta}\|_{L^2(\Lambda)}
\le
C_k e^{-\sigma_k N_b},
\]
and that the exponential weighted bound
\[
\sum_{j\ge0}e^{2\beta j}|\widehat e_{k,j}|^2
\le
M_{k,\beta}^2
\]
holds for the local reconstruction error. Let
\[
\sigma_*=\min_{1\le k\le K}\sigma_k .
\]
Then, for every \(0<\sigma'<\sigma_*\), there exists a constant
\(C_{\alpha,b,\sigma'}\), independent of \(N_b\), such that
\begin{equation}
\left\|
{}^C D_{0+}^{\alpha}f-\mathcal D_{N_b,\eta}^{\alpha}f
\right\|_{L^2(0,b)}
\le
C_{\alpha,b,\sigma'}
\left(
\sum_{k=1}^{K}s_k^{-1}
\right)^{1/2}
e^{-\sigma' N_b}.
\label{eq:caputo_exponential_weight_error}
\end{equation}
\end{theorem}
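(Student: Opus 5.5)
The estimate follows by combining the Caputo stability bound of Theorem~\ref{thm:caputo_stability} with the global exponential derivative estimate of Corollary~\ref{cor:global_derivative_exponential}, in the same way Theorem~\ref{thm:caputo_algebraic_weight_error} combines the stability bound with Corollary~\ref{cor:global_derivative_algebraic}.

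\emph{Step 1: reduce to a derivative error.} By the identity \eqref{eq:caputo_error_identity}, the Caputo error is exactly $I^{1-\alpha}(f'-q_{N_b}^{\eta})$. The $L^2$ part of Theorem~\ref{thm:caputo_stability} applies once we know $q_{N_b}^{\eta}\in L^2(0,b)$. This holds because $q_{N_b}^{\eta}$ is piecewise polynomial on finitely many intervals. It gives
\[
\left\|{}^C D_{0+}^{\alpha}f-\mathcal D_{N_b,\eta}^{\alpha}f\right\|_{L^2(0,b)}
\le \frac{b^{1-\alpha}}{\Gamma(2-\alpha)}\,\|f'-q_{N_b}^{\eta}\|_{L^2(0,b)} .
\]

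\emph{Step 2: bound each local derivative error.} On each block we apply Corollary~\ref{cor:local_derivative_exponential} to $g_k$, with rate $\sigma_k$ and weighted bound $M_{k,\beta}$. For any $0<\sigma'<\sigma_*\le\sigma_k$, pick one $r>1$ with $\sigma_*(1-1/r)>\sigma'$. This single $r$ works for every $k$. Through \eqref{eq:exp_weight_controls_Hr} and \eqref{eq:local_derivative_exp_weight_general} we get
\[
\|g_k'-q_{k,N_b,\eta}'\|_{L^2(\Lambda)}\le C_{r,\beta}M_{k,\beta}^{1/r}C_k^{(r-1)/r}e^{-\sigma_k(1-1/r)N_b}\le \widetilde C_k e^{-\sigma' N_b}.
\]
Let $C_{\sigma'}=\max_k \widetilde C_k$. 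The maximum is finite because $K$ is finite, and it does not depend on $N_b$.

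\emph{Step 3: transfer to the physical interval.} Insert the local bounds into the scaling identity \eqref{eq:global_derivative_scaling}. This gives
\[
\|f'-q_{N_b}^{\eta}\|_{L^2(0,b)}\le C_{\sigma'}\Bigl(\sum_{k}s_k^{-1}\Bigr)^{1/2}e^{-\sigma'N_b},
\]
which is Corollary~\ref{cor:global_derivative_exponential}. Combining this with Step 1 and setting $C_{\alpha,b,\sigma'}=b^{1-\alpha}C_{\sigma'}/\Gamma(2-\alpha)$ yields \eqref{eq:caputo_exponential_weight_error}.

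\emph{Main obstacle: uniformity of constants in $N_b$.} The only delicate point is that the constants $C_k$, $M_{k,\beta}$ and $C_{r,\beta}$ must not depend on $N_b$. In particular, the passage from the coefficient bound \eqref{eq:exponential_weight_bound} to an $H^r$ bound in \eqref{eq:exp_weight_controls_Hr} must be uniform in $N_b$. I would take these constants as given by the hypotheses, since they are assumed independent of $N_b$. I would also note the justification for \eqref{eq:exp_weight_controls_Hr}: the $H^r$ norm of a polynomial-frame expansion grows at most polynomially in $j$ per mode, and this growth is absorbed by the weight $e^{\beta j}$.
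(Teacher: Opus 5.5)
Your proposal is correct and takes the route the paper indicates: the $L^2$ stability bound of Theorem~\ref{thm:caputo_stability} combined with the global exponential derivative estimate of Corollary~\ref{cor:global_derivative_exponential}. Your Steps 2--3 make explicit what the paper leaves implicit, namely fixing one $r$ with $\sigma_*(1-1/r)>\sigma'$ for all blocks and absorbing the finitely many $C_k$ and $M_{k,\beta}$ into a single constant independent of $N_b$.
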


\begin{remark}
The exponential convergence result should be interpreted as a consequence of
the weighted GTSVD reconstruction estimates, not as a classical projection
estimate. The computed GTSVD coefficients are not generally equal to ordinary
Legendre coefficients. The role of the exponential weight is to control the
high-index components of the reconstruction error, while the \(L^2\) estimate
controls the low-index part. The interpolation argument then transfers these
two pieces of information to the derivative and finally to the Caputo
derivative.
\end{remark}

\subsection{Consistency of the moment construction}
\label{subsec:moment_consistency}

The exact moment construction has the following consistency property.

\begin{proposition}
\label{prop:moment_consistency}
Suppose that on each interval \(I_k\), the derivative approximation
\(q_{N_b}^{\eta}\) coincides with \(f'\). Then for every evaluation point
\(x_i\),
\[
\mathcal D_{N_b,\eta}^{\alpha}f(x_i)
=
{}^{C}D_{0+}^{\alpha}f(x_i).
\]
\end{proposition}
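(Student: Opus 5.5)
The argument runs through the Caputo error identity \eqref{eq:caputo_error_identity}, together with a check that the moment-based assembly of Section~\ref{sec:caputo_llf} reproduces the integral \eqref{eq:caputo_approx_q} exactly. There are two steps. First, show that the weighted sum $\sum_k\sum_j W_{i,k,j}^{(\alpha)}c_{k,\eta,j}$ equals the integral in \eqref{eq:caputo_approx_q} with no approximation. Second, under the hypothesis $q_{N_b}^{\eta}=f'$ on every $I_k$, show that this integral is the Caputo derivative.

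For the first step, fix $x_i$ and split $[0,x_i]$ into the full history intervals $I_k$ with $x_k\le x_i$ and the partial interval $[x_{k-1},x_i]$ containing $x_i$; this split is exact by additivity of the integral. On each piece, substitute $\xi=c_k+h_kt$. Then $d\xi=h_k\,dt$ and $(x_i-\xi)^{-\alpha}=h_k^{-\alpha}(\lambda_{i,k}-t)^{-\alpha}$. By \eqref{eq:local_derivative_q}, $q_{N_b}^{\eta}(\xi)=h_k^{-1}\sum_j c_{k,\eta,j}(P_j^{(T)})'(t)$. The factors combine to $h_k\cdot h_k^{-\alpha}\cdot h_k^{-1}=h_k^{-\alpha}$, which is exactly the prefactor in \eqref{eq:local_caputo_weight}, so each piece equals $\sum_j W_{i,k,j}^{(\alpha)}c_{k,\eta,j}$.

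Next, the polynomial expansion of $(P_j^{(T)})'$ is finite, so \eqref{eq:weight_moment_sum} is linearity of the integral. The reduction \eqref{eq:J_via_H} comes from the substitution $t=t_U-y$ followed by a binomial expansion of $(t_U-y)^r$. The closed forms for $H_s$ follow by writing $y^s=((\delta+y)-\delta)^s$, expanding binomially, and integrating $(\delta+y)^{r-\alpha}$ termwise. This is legitimate because $r-\alpha>-1$, so every integral converges, including the case $\delta=0$. Hence the moment weights are exact, with no quadrature error at the singular endpoint.

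For the second step, if $q_{N_b}^{\eta}=f'$ a.e.\ on each $I_k$, then the integrand in \eqref{eq:caputo_error_identity} vanishes a.e. on $[0,x_i]$, so the difference is zero. Equivalently, \eqref{eq:caputo_approx_q} becomes \eqref{eq:caputo_def_intro} at $x=x_i$.

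I expect the main obstacle to be the bookkeeping rather than any analysis. Two points need care: that the partial current interval uses $[t_L,t_U]=[-1,\lambda_{i,k}]$, so that $\delta=0$ there, and that the singularity is integrable. Both are settled by $\alpha<1$. For the block-convolution form, the same computation with $\lambda_{d,\mu}=2d+\theta_\mu$ shows that the weights $A^{(\alpha)}$ and $B^{(\alpha)}$ are just instances of \eqref{eq:local_caputo_weight}, so the statement holds for that form as well.
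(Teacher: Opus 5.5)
Your proposal is correct and follows the paper's route: when $q_{N_b}^{\eta}=f'$ on every contributing interval, the defining integral \eqref{eq:caputo_approx_q} becomes the Caputo integral \eqref{eq:caputo_def_intro} at $x_i$, and the moment formulas evaluate that integral exactly. The paper only asserts this exactness in one sentence, whereas your step 1 verifies the change of variables and the closed forms for $J_r^{(\alpha)}$ and $H_s$; since $\mathcal D_{N_b,\eta}^{\alpha}f$ is defined by the integral, your step 2 alone already proves the identity as stated.
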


\proof
If \(q_{N_b}^{\eta}=f'\) on each contributing interval, then
\eqref{eq:caputo_approx_q} becomes
\[
\mathcal D_{N_b,\eta}^{\alpha}f(x_i)
=
\frac{1}{\Gamma(1-\alpha)}
\int_0^{x_i}
(x_i-\xi)^{-\alpha}f'(\xi)\,d\xi ,
\]
which is exactly the Caputo derivative
\({}^{C}D_{0+}^{\alpha}f(x_i)\). The moment formulas provide an exact
evaluation of this integral when \(q_{N_b}^{\eta}\) is represented by the
restricted polynomial frame basis.
\proofend
\smallskip

\begin{remark}
Proposition~\ref{prop:moment_consistency} is a statement about the exactness
of the Caputo moment weights once the local derivative has been represented
exactly. It is not a statement that the truncated weighted GTSVD automatically
recovers all local polynomial coefficients exactly.
\end{remark}

\section{Numerical Experiments}
\label{sec:numerical_experiments}

In this section, we test the proposed LLF-Caputo method. The numerical
examples serve several purposes. First, we verify the exact moment weights and
the block assembly by using polynomial functions with closed-form Caputo
derivatives. Second, we compare the proposed method with the classical L1
scheme under comparable sampling sizes. Third, we compare it with a global
Chebyshev spectral benchmark to clarify the relation between local equispaced
approximation and global nonuniform spectral approximation. Fourth, we examine
the storage and time behavior of the block-convolution implementation.
Finally, we demonstrate that the local construction can be embedded into a
block solver for a simple fractional differential equation.

The choices of the LLF parameters \(T\), \(\gamma\), and \(N_b\) follow the
parameter study in the local Legendre frame approximation method
\cite{GongZhaoWang2026}. In that work, the extension parameter \(T\) was
shown to act mainly as a stability parameter; for the economical sampling
ratio \(\gamma=1\), the lower stable threshold is approximately
\(T_1\approx 5.6\), and \(T=6\) was recommended as a robust default. The same
study also showed that increasing \(\gamma\) mainly increases the number of
sampling points without essentially reducing the required local degree, so
\(\gamma=1\) is the most economical choice. For low-to-moderate local
frequencies, \(N_b=m=15\) gives reliable high accuracy, whereas more
oscillatory functions can be handled either by increasing the number of local
blocks or by using a larger local degree. Therefore, in the present paper we
keep \(T=6\), \(\gamma=1\), and \(N_b=m=15\) fixed in all main experiments,
and use the number of local blocks \(K\) to control the global resolution.
This choice is not tuned to individual examples, but is inherited from the
LLF approximation framework.
The truncation threshold in the weighted generalized inverse is \(10^{-14}\),
and the exponential weight parameter is \(\beta=0.15\). For the differentiation
tests, \(s=15\) evaluation points are used in each block,
\[
x_{k,\mu}=\frac{k-1+\rho_\mu}{K},
\qquad
\rho_\mu=\frac{\mu}{s},\quad \mu=1,\ldots,s .
\]
The relative discrete \(L^2\) error is defined by
\[
E_2=
\frac{
\left(\frac{1}{M_e}\sum_{i=1}^{M_e}|d_i-d_i^{\rm ref}|^2\right)^{1/2}
}{
\left(\frac{1}{M_e}\sum_{i=1}^{M_e}|d_i^{\rm ref}|^2\right)^{1/2}
},
\]
where \(M_e\) is the number of evaluation points, and \(d_i\) and
\(d_i^{\rm ref}\) denote the numerical and reference values, respectively.

Except for the moment-weight verification in Section~5.1, the remaining
accuracy experiments use the analytic test functions
\[
f_1(x)=e^x,\qquad
f_2(x)=\cos(30x),\qquad
f_3(x)=\exp(\sin(2.7\pi x)+\cos(\pi x)).
\]
For \(f_1\), the reference Caputo derivative is evaluated by
\[
{}^C D_{0+}^{\alpha}e^x
=
\sum_{n=0}^{\infty}\frac{x^{n+1-\alpha}}{\Gamma(n+2-\alpha)}.
\]
For \(f_2\) and \(f_3\), reference values are computed by a high-order
Gauss--Legendre quadrature after removing the weak singularity.
\subsection{Verification of exact moment weights}
\label{subsec:moment_weight_verification}

We first verify the exact moment weights using polynomial test functions with
closed-form Caputo derivatives. This experiment is designed to test only the
moment formulas and the block assembly, and does not use the analytic
functions employed in the subsequent accuracy experiments.

For an integer \(p\ge0\), let
\[
f_p(x)=\frac{x^{p+1}}{p+1},
\qquad
f_p'(x)=x^p .
\]
Then
\[
{}^C D_{0+}^{\alpha}f_p(x)
=
\frac{\Gamma(p+1)}{\Gamma(p+2-\alpha)}
x^{p+1-\alpha}.
\]
On each local interval, \(f_p(c_k+h_kt)\) is a polynomial in \(t\). Since
\(p+1\le N_b-1\) in the tests, this local polynomial can be represented exactly
in the restricted Legendre frame. Therefore, any discrepancy between the
moment-weight result and the closed-form Caputo derivative reflects the
accuracy of the moment formulas and the block assembly, not the local
GTSVD reconstruction error.

\begin{table}[htbp]
\centering
\caption{Closed-form verification of the exact moment weights. The test
functions are \(f_p(x)=x^{p+1}/(p+1)\), for which the Caputo derivative is
known analytically. The local frame coefficients of \(f_p\) are computed
exactly, so the errors reflect only the moment-weight construction and block
assembly.}
\label{tab:moment_weight_poly_exact}
\begin{tabular}{c c c c}
\hline
\(p\) & \(\alpha\) & Max absolute error & Relative \(L^2\) error \\
\hline
0  & 0.25 & \(2.2204\times10^{-16}\) & \(1.2673\times10^{-16}\) \\
0  & 0.50 & \(4.4409\times10^{-16}\) & \(1.8435\times10^{-16}\) \\
0  & 0.75 & \(4.4409\times10^{-16}\) & \(1.9542\times10^{-16}\) \\
2  & 0.25 & \(1.6653\times10^{-16}\) & \(2.2662\times10^{-16}\) \\
2  & 0.50 & \(4.4409\times10^{-16}\) & \(2.4205\times10^{-16}\) \\
2  & 0.75 & \(3.3307\times10^{-16}\) & \(2.0051\times10^{-16}\) \\
5  & 0.25 & \(1.6653\times10^{-16}\) & \(3.4506\times10^{-16}\) \\
5  & 0.50 & \(1.6653\times10^{-16}\) & \(2.9197\times10^{-16}\) \\
5  & 0.75 & \(4.4409\times10^{-16}\) & \(3.2731\times10^{-16}\) \\
8  & 0.25 & \(1.3878\times10^{-16}\) & \(2.9064\times10^{-16}\) \\
8  & 0.50 & \(2.2204\times10^{-16}\) & \(4.0305\times10^{-16}\) \\
8  & 0.75 & \(3.3307\times10^{-16}\) & \(3.9300\times10^{-16}\) \\
12 & 0.25 & \(9.7145\times10^{-17}\) & \(2.4557\times10^{-16}\) \\
12 & 0.50 & \(3.3307\times10^{-16}\) & \(8.1929\times10^{-16}\) \\
12 & 0.75 & \(1.1102\times10^{-15}\) & \(1.8753\times10^{-15}\) \\
\hline
\end{tabular}
\end{table}
The errors in Table~\ref{tab:moment_weight_poly_exact} remain at the level of
roundoff error for all tested polynomial degrees and fractional orders. Even
for \(p=12\), which is close to the largest exactly representable polynomial
degree under \(N_b=15\), the relative error is still below \(2\times10^{-15}\).
This confirms that the exact moment weights correctly evaluate the weakly
singular Caputo action on the local frame representation. Hence, in the
following experiments, the observed errors are dominated by the local
LLF-GTSVD reconstruction and by the comparison method, rather than by the
evaluation of the Caputo history weights.
\subsection{Comparison with the classical L1 scheme}
\label{subsec:l1_comparison}

The L1 scheme is included as a classical low-order time-stepping benchmark. For
a uniform grid \(x_n=nh\), it approximates the Caputo derivative by
\[
{}^C D_{0+}^{\alpha}f(x_n)
\approx
\frac{h^{-\alpha}}{\Gamma(2-\alpha)}
\sum_{r=0}^{n-1}a_r\bigl(f(x_{n-r})-f(x_{n-r-1})\bigr),
\qquad
a_r=(r+1)^{1-\alpha}-r^{1-\alpha}.
\]
For the LLF-Caputo method, the number of unique local sampling points is
\[
M_{\rm LLF}=K(m-1)+1,
\]
whereas the L1 scheme uses \(M_{\rm L1}=Ks+1\) grid points. Thus the sample
sizes are comparable.

\begin{table}[htbp]
\centering
\caption{Comparison between the proposed LLF-Caputo method and the classical L1 scheme with comparable sample sizes. Here \(K=16\), \(T=6\), \(\gamma=1\), and \(N_b=m=15\).}
\label{tab:llf_l1_k16}
\begin{tabular}{c c c c c c}
\hline
Function & $\alpha$ & $M_{\rm LLF}$ & $M_{\rm L1}$ & LLF-Caputo $E_2$ & L1 $E_2$ \\
\hline
$e^x$ & 0.25 & 225 & 241 & 1.3476e-13 & 1.3752e-05 \\
$\cos(30x)$ & 0.25 & 225 & 241 & 1.2029e-12 & 2.1948e-03 \\
$f_3$ & 0.25 & 225 & 241 & 1.0365e-13 & 3.9597e-04 \\
$e^x$ & 0.50 & 225 & 241 & 9.4269e-13 & 8.3853e-05 \\
$\cos(30x)$ & 0.50 & 225 & 241 & 1.0837e-13 & 9.0336e-03 \\
$f_3$ & 0.50 & 225 & 241 & 4.0700e-13 & 1.9894e-03 \\
$e^x$ & 0.75 & 225 & 241 & 4.7789e-12 & 4.3062e-04 \\
$\cos(30x)$ & 0.75 & 225 & 241 & 4.9591e-13 & 2.5465e-02 \\
$f_3$ & 0.75 & 225 & 241 & 1.9354e-12 & 7.3949e-03 \\
\hline
\end{tabular}
\end{table}

Table~\ref{tab:llf_l1_k16} shows that, for \(K=16\), the LLF-Caputo method
reaches errors between \(10^{-13}\) and \(10^{-12}\), while the L1 errors remain
between \(10^{-5}\) and \(10^{-2}\). The convergence behavior for
\(\alpha=0.5\) is shown in Fig.~\ref{fig:llf_l1_alpha05}. The LLF-Caputo method
rapidly reaches a roundoff-dominated accuracy level, whereas the L1 scheme shows
algebraic convergence.

\begin{figure}[htbp]
\centering
\includegraphics[width=0.72\textwidth]{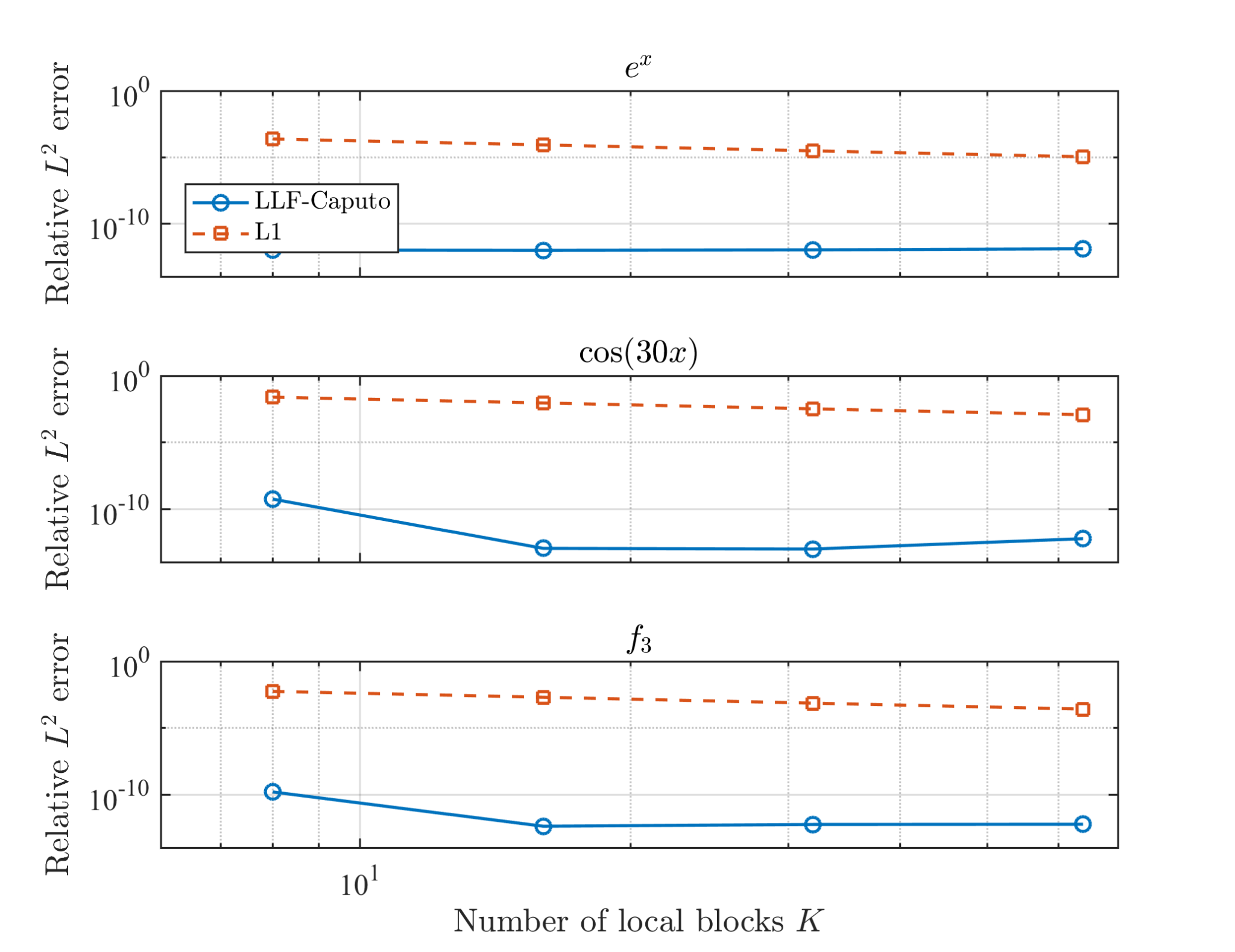}
\caption{Relative \(L^2\) errors of LLF-Caputo and the classical L1 scheme for
\(\alpha=0.5\). The three panels correspond to \(e^x\), \(\cos(30x)\), and
\(f_3\), respectively.}
\label{fig:llf_l1_alpha05}
\end{figure}

\subsection{Comparison with a global Chebyshev spectral benchmark}
\label{subsec:cheb_comparison}

We next compare the proposed method with a global Chebyshev spectral benchmark.
The Chebyshev method uses global Chebyshev--Lobatto nodes on \([0,1]\). A
global polynomial interpolant is constructed, its derivative is evaluated, and
the Caputo integral is computed using a high-order singularity-removed
Gauss--Legendre quadrature. This benchmark represents an ideal global spectral
setting with nonuniform nodes.

The purpose of this experiment is to clarify the different settings of the two
methods: the global Chebyshev method uses global nonuniform nodes, while the
proposed method uses local equispaced samples and a reusable local frame
matrix.

\begin{table}[htbp]
\centering
\caption{Comparison between the proposed LLF-Caputo method and a global Chebyshev spectral benchmark. The LLF-Caputo method uses \(K=16\), \(T=6\), \(\gamma=1\), and \(N_b=m=15\); the Chebyshev method uses global Chebyshev--Lobatto nodes.}
\label{tab:cheb_reduced_alpha05_k16}
\begin{tabular}{c c c c c c}
\hline
Function & $M_{\rm LLF}$ & LLF-Caputo $E_2$ & Factor & $M_{\rm Cheb}$ & Chebyshev $E_2$ \\
\hline
$e^x$ & 225 & 9.4269e-13 & 1 & 225 & 4.6510e-15 \\
$e^x$ & 225 & 9.4269e-13 & 2 & 113 & 2.5602e-15 \\
$e^x$ & 225 & 9.4269e-13 & 4 & 57 & 1.3299e-15 \\
$e^x$ & 225 & 9.4269e-13 & 6 & 38 & 1.1401e-15 \\
$e^x$ & 225 & 9.4269e-13 & 8 & 29 & 1.2534e-15 \\
$\cos(30x)$ & 225 & 1.0837e-13 & 1 & 225 & 3.8135e-15 \\
$\cos(30x)$ & 225 & 1.0837e-13 & 2 & 113 & 3.2492e-15 \\
$\cos(30x)$ & 225 & 1.0837e-13 & 4 & 57 & 2.4823e-15 \\
$\cos(30x)$ & 225 & 1.0837e-13 & 6 & 38 & 3.2214e-12 \\
$\cos(30x)$ & 225 & 1.0837e-13 & 8 & 29 & 1.2255e-06 \\
$f_3$ & 225 & 4.0700e-13 & 1 & 225 & 3.3173e-15 \\
$f_3$ & 225 & 4.0700e-13 & 2 & 113 & 1.6357e-15 \\
$f_3$ & 225 & 4.0700e-13 & 4 & 57 & 1.6123e-14 \\
$f_3$ & 225 & 4.0700e-13 & 6 & 38 & 6.9336e-09 \\
$f_3$ & 225 & 4.0700e-13 & 8 & 29 & 2.9708e-06 \\
\hline
\end{tabular}
\end{table}

The results show that global Chebyshev approximation is extremely effective for
analytic functions. With \(\alpha=0.5\) and \(K=16\), the Chebyshev benchmark
often reaches \(10^{-15}\)-level errors with fewer nodes than the local LLF
method. On the other hand, the LLF-Caputo method reaches high accuracy from
local equispaced samples. This distinction is important in applications where
equispaced data are natural, where local structures are present, or where a
blockwise formulation is desired.

\begin{figure}[htbp]
\centering
\includegraphics[width=0.72\textwidth]{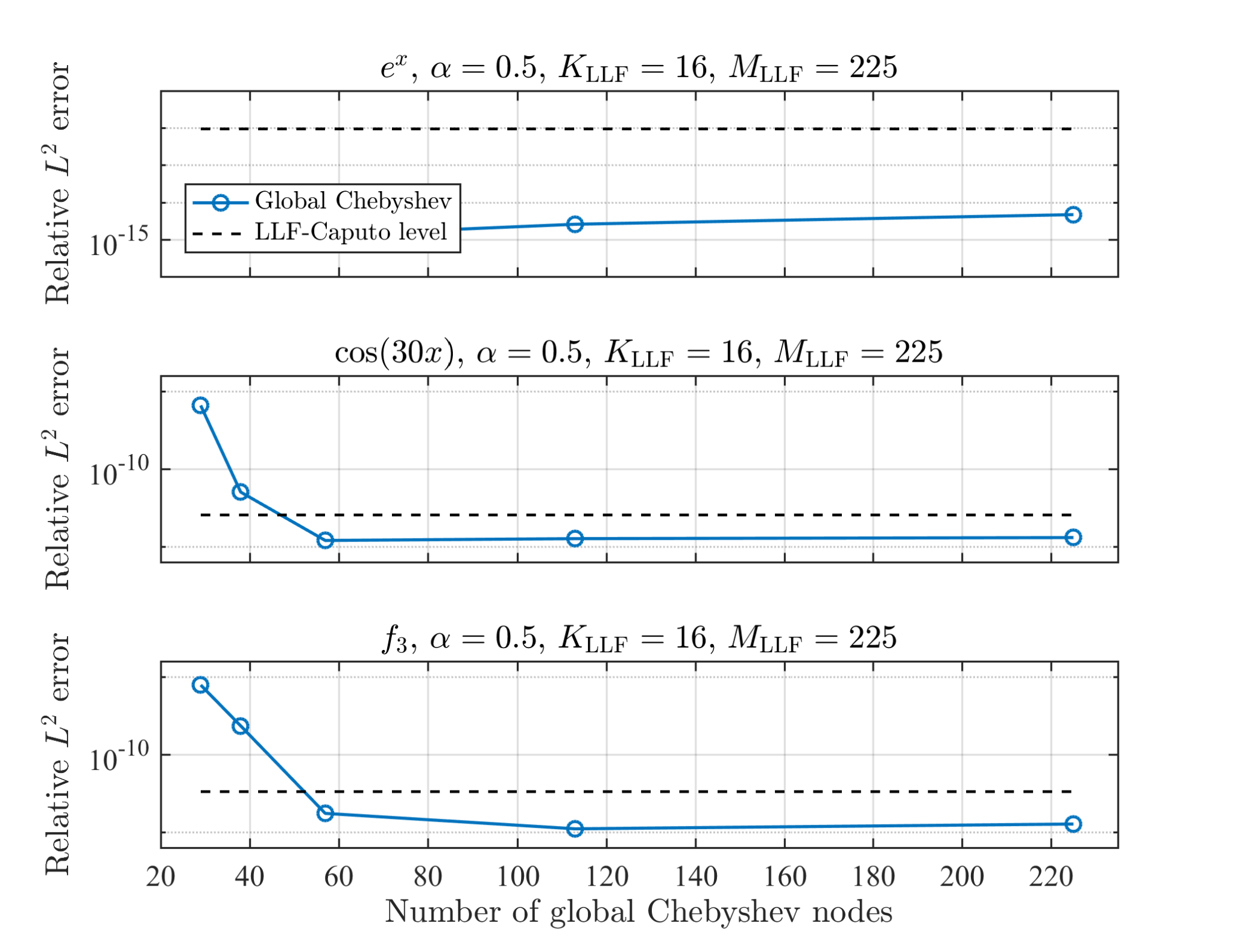}
\caption{Comparison with a global Chebyshev spectral benchmark for
\(\alpha=0.5\) and \(K_{\rm LLF}=16\). The dashed horizontal lines indicate the
LLF-Caputo error levels, while the solid curves show the global Chebyshev
errors with different numbers of Chebyshev--Lobatto nodes.}
\label{fig:cheb_benchmark_alpha05}
\end{figure}

\subsection{Computational cost}
\label{subsec:numerical_cost}

We record the computational time and storage for \(f(x)=\cos(30x)\) and
\(\alpha=0.5\). The LLF storage is the storage of the current and lag-dependent
Caputo weights. For the Chebyshev benchmark, the reported storage is a dense
global matrix proxy, included only to illustrate the cost of a direct global
matrix implementation.

\begin{table}[htbp]
\centering
\caption{Cost comparison for \(f(x)=\cos(30x)\) and \(\alpha=0.5\). The Chebyshev storage is a dense-matrix storage proxy, included only as a reference for a direct global matrix implementation.}
\label{tab:cost_alpha05_osc30}
\begin{tabular}{c c c c c c}
\hline
$K$ & $M$ & LLF storage (MB) & Cheb dense storage (MB) & LLF time (s) & Cheb time (s) \\
\hline
8 & 113 & 0.0137 & 0.0974 & 0.489 & 0.077 \\
16 & 225 & 0.0275 & 0.3862 & 0.870 & 0.377 \\
32 & 449 & 0.0549 & 1.5381 & 1.550 & 1.476 \\
64 & 897 & 0.1099 & 6.1387 & 2.397 & 5.293 \\
\hline
\end{tabular}
\end{table}

As shown in Table~\ref{tab:cost_alpha05_osc30} and
Fig.~\ref{fig:cost_alpha05_osc30}, the storage of the LLF-Caputo history weights
grows almost linearly with the number of samples, while the dense global proxy
grows quadratically. The timing results are implementation dependent, but they
show that for larger sample sizes the block-convolution LLF implementation
becomes more favorable. This supports the complexity discussion in
Section~\ref{subsec:complexity}.

\begin{figure}[htbp]
\centering
\includegraphics[width=0.70\textwidth]{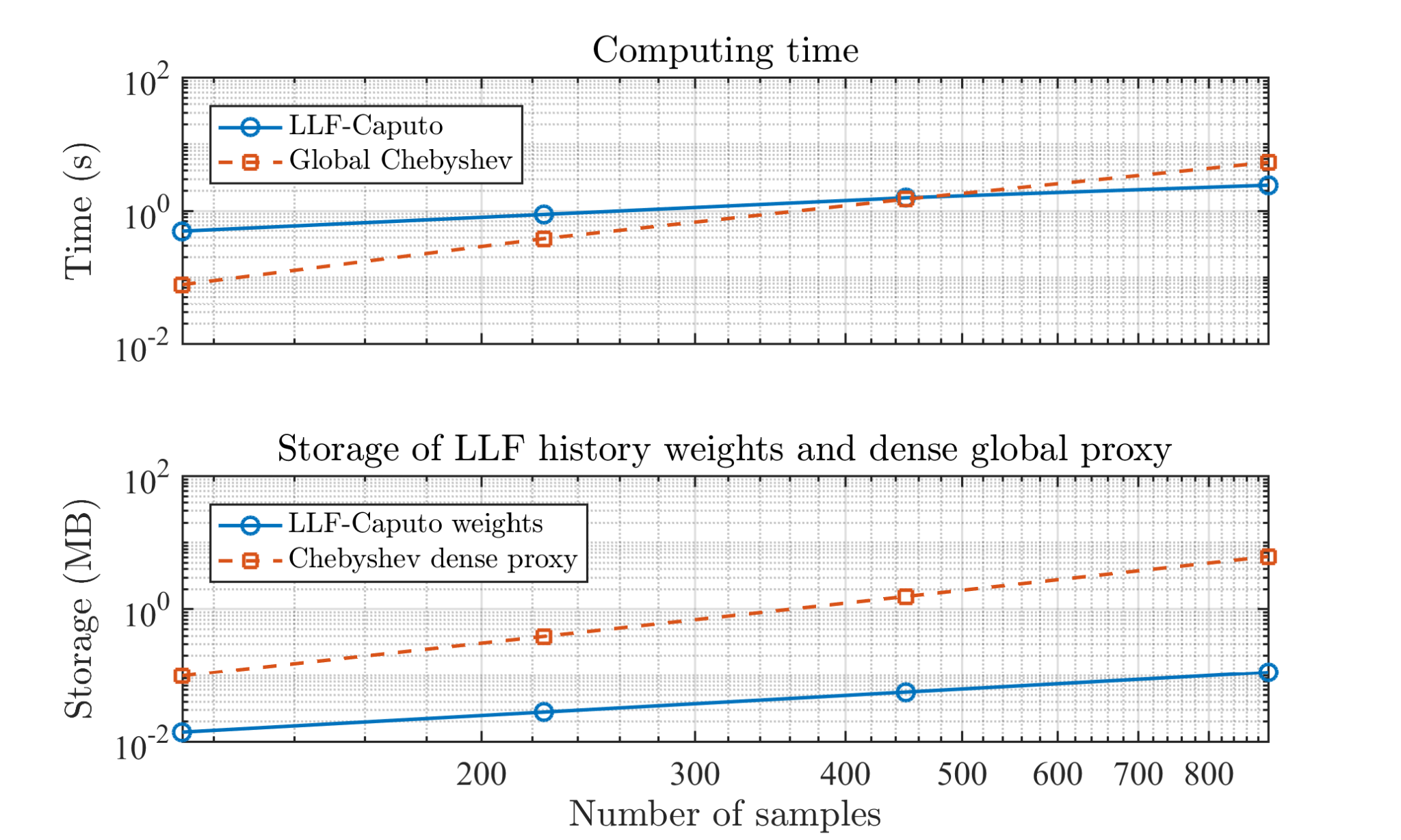}
\caption{Computing time and storage for \(f(x)=\cos(30x)\) and \(\alpha=0.5\).
The Chebyshev storage is a dense-matrix proxy and does not represent all
possible optimized global spectral implementations.}
\label{fig:cost_alpha05_osc30}
\end{figure}

\subsection{Application to a fractional differential equation}
\label{subsec:fde_example}

Finally, we consider
\[
{}^C D_{0+}^{\alpha}u(x)+\lambda u(x)=g(x),
\qquad 0<x\le 1,
\]
with \(u(0)=u_0\). We take \(\lambda=1\) and use manufactured solutions
\(u(x)=e^x\), \(u(x)=\cos(30x)\), and \(u(x)=f_3(x)\). The right-hand side is
\[
g(x)={}^C D_{0+}^{\alpha}u(x)+\lambda u(x).
\]

The local form leads to a block solver. On the \(k\)-th block, the coefficient
vector is written in the retained stable subspace as
\[
\mathbf c_k=C_{\rm red}\mathbf z_k .
\]
In the experiments, the retained rank is \(r=13\), and \(r\) collocation
equations are used on each block. The current block satisfies a small linear
system of the form
\[
\left(B^{(\alpha)}+\lambda\Phi\right)C_{\rm red}\mathbf z_k
=
\mathbf g_k-\mathbf H_k,
\]
where \(\mathbf H_k\) contains known history contributions from previous blocks.
The first row is replaced by the initial condition for \(k=1\), and by a
continuity condition at the left endpoint of the current block for \(k>1\).
Thus the computation proceeds block by block.

\begin{table}[htbp]
\centering
\caption{Numerical solution of the fractional differential equation \({}^C D_{0+}^{\alpha}u+\lambda u=g\) with manufactured solutions. Here \(K=16\), \(\lambda=1\), \(T=6\), \(\gamma=1\), \(N_b=m=15\), and the reduced local rank is \(r=13\).}
\label{tab:fde_k16}
\begin{tabular}{c c c c c c}
\hline
Exact solution & $\alpha$ & $M_{\rm LLF}$ & $M_{\rm L1}$ & LLF block solver $E_2$ & L1 $E_2$ \\
\hline
$e^x$ & 0.25 & 209 & 209 & 1.1335e-12 & 4.9336e-06 \\
$\cos(30x)$ & 0.25 & 209 & 209 & 2.4085e-12 & 2.4097e-03 \\
$f_3$ & 0.25 & 209 & 209 & 9.0698e-13 & 2.0298e-04 \\
$e^x$ & 0.50 & 209 & 209 & 4.3171e-12 & 3.2802e-05 \\
$\cos(30x)$ & 0.50 & 209 & 209 & 2.1256e-12 & 1.0277e-02 \\
$f_3$ & 0.50 & 209 & 209 & 1.5695e-12 & 1.0743e-03 \\
$e^x$ & 0.75 & 209 & 209 & 8.6387e-12 & 1.7683e-04 \\
$\cos(30x)$ & 0.75 & 209 & 209 & 6.3732e-12 & 2.9719e-02 \\
$f_3$ & 0.75 & 209 & 209 & 1.1811e-11 & 4.4728e-03 \\
\hline
\end{tabular}
\end{table}

The results in Table~\ref{tab:fde_k16} show that the LLF block solver reaches
\(10^{-12}\)-level accuracy for the manufactured solutions, while the L1 solver
remains several orders of magnitude less accurate under the same number of
unknowns. Figure~\ref{fig:fde_convergence_alpha05} shows the convergence
behavior for \(\alpha=0.5\), and Figs.~\ref{fig:fde_solution_osc30} and
\ref{fig:fde_error_osc30} give the solution and pointwise error for the
oscillatory manufactured solution \(u(x)=\cos(30x)\).

\begin{figure}[htbp]
\centering
\includegraphics[width=0.92\textwidth]{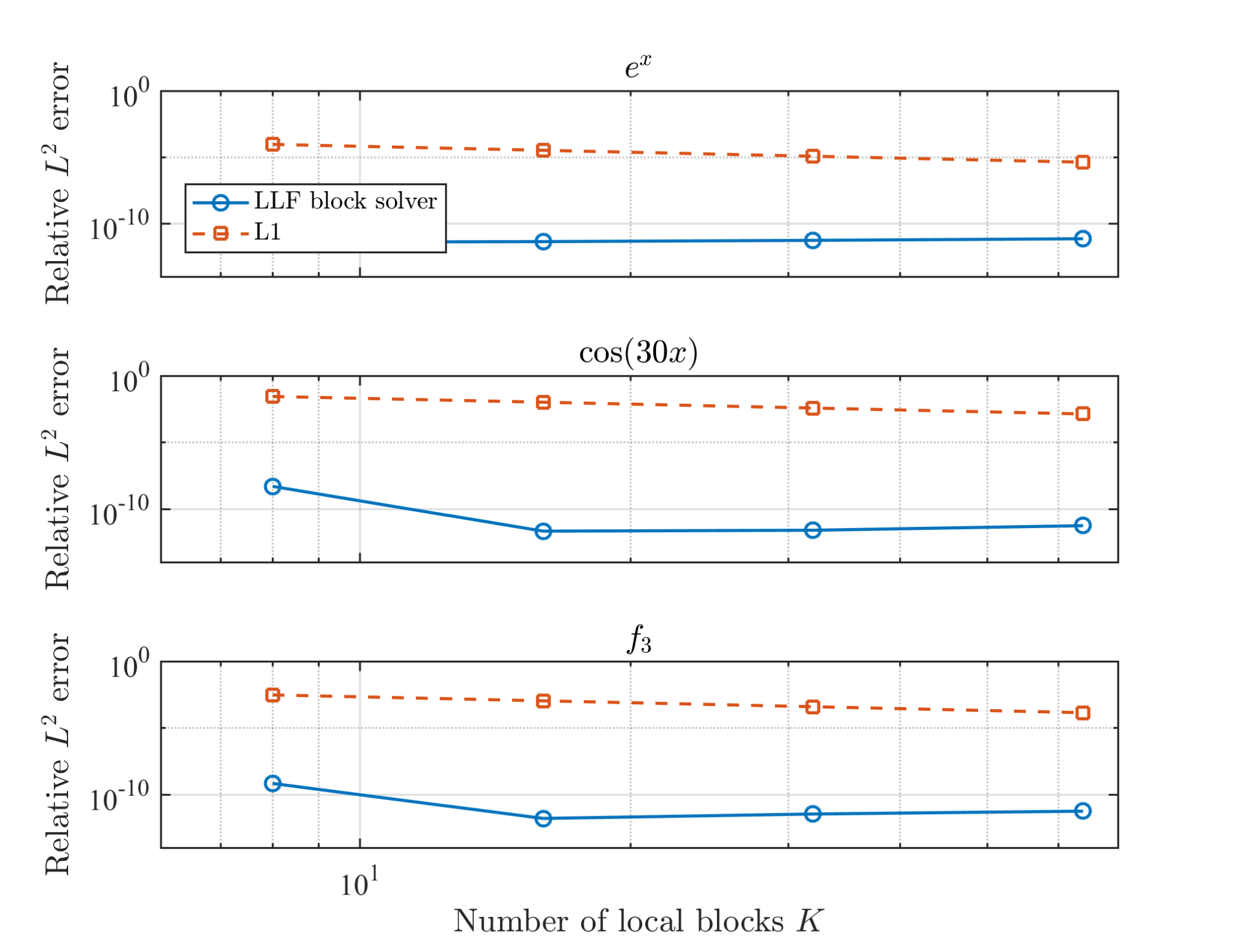}
\caption{Relative \(L^2\) solution errors for the fractional differential
equation with \(\alpha=0.5\) and \(\lambda=1\).}
\label{fig:fde_convergence_alpha05}
\end{figure}

\begin{figure}[htbp]
\centering
\includegraphics[width=0.90\textwidth]{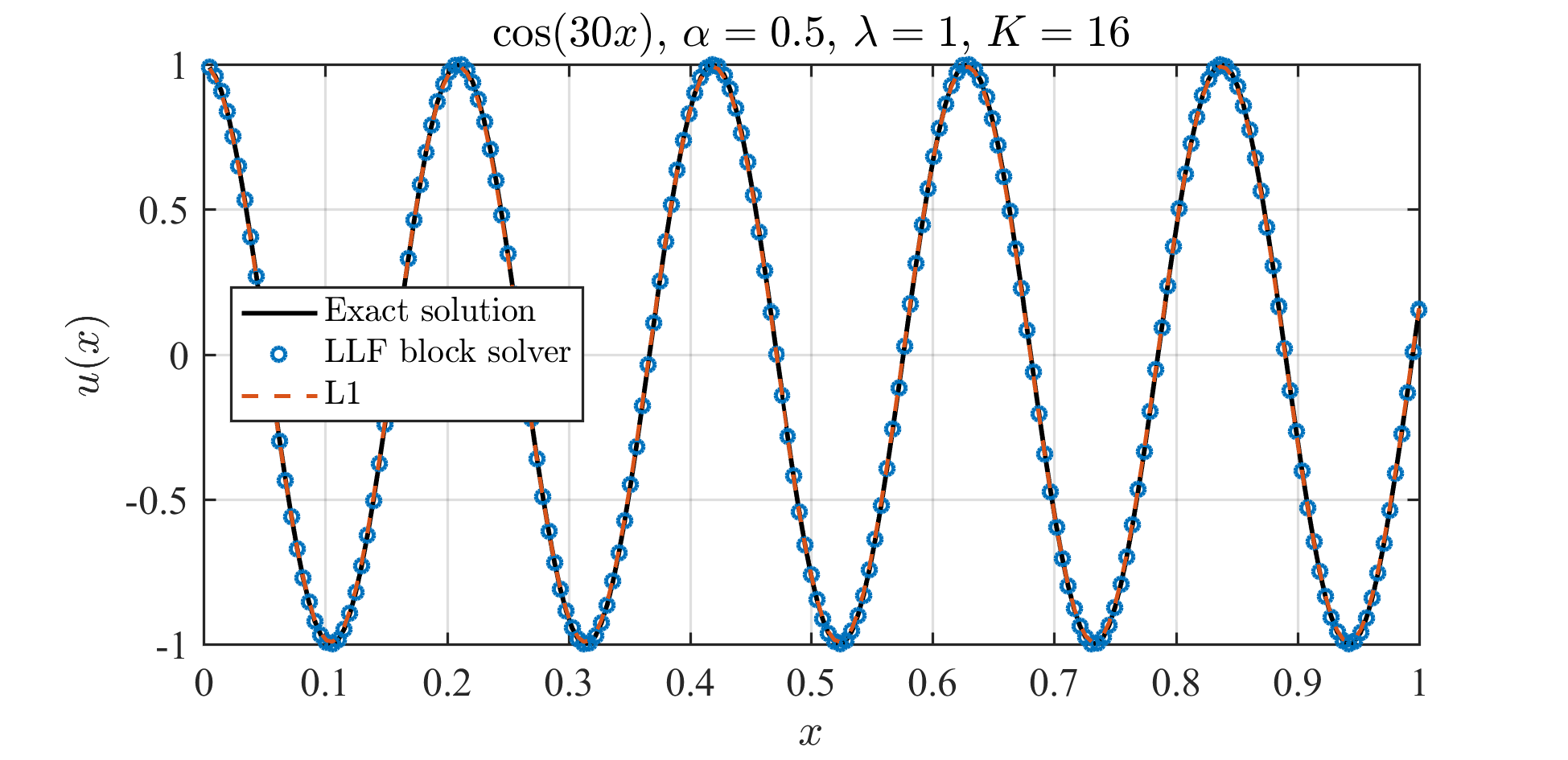}
\caption{Numerical solution of the fractional differential equation with
manufactured solution \(u(x)=\cos(30x)\), \(\alpha=0.5\), \(\lambda=1\), and
\(K=16\).}
\label{fig:fde_solution_osc30}
\end{figure}

\begin{figure}[htbp]
\centering
\includegraphics[width=0.90\textwidth]{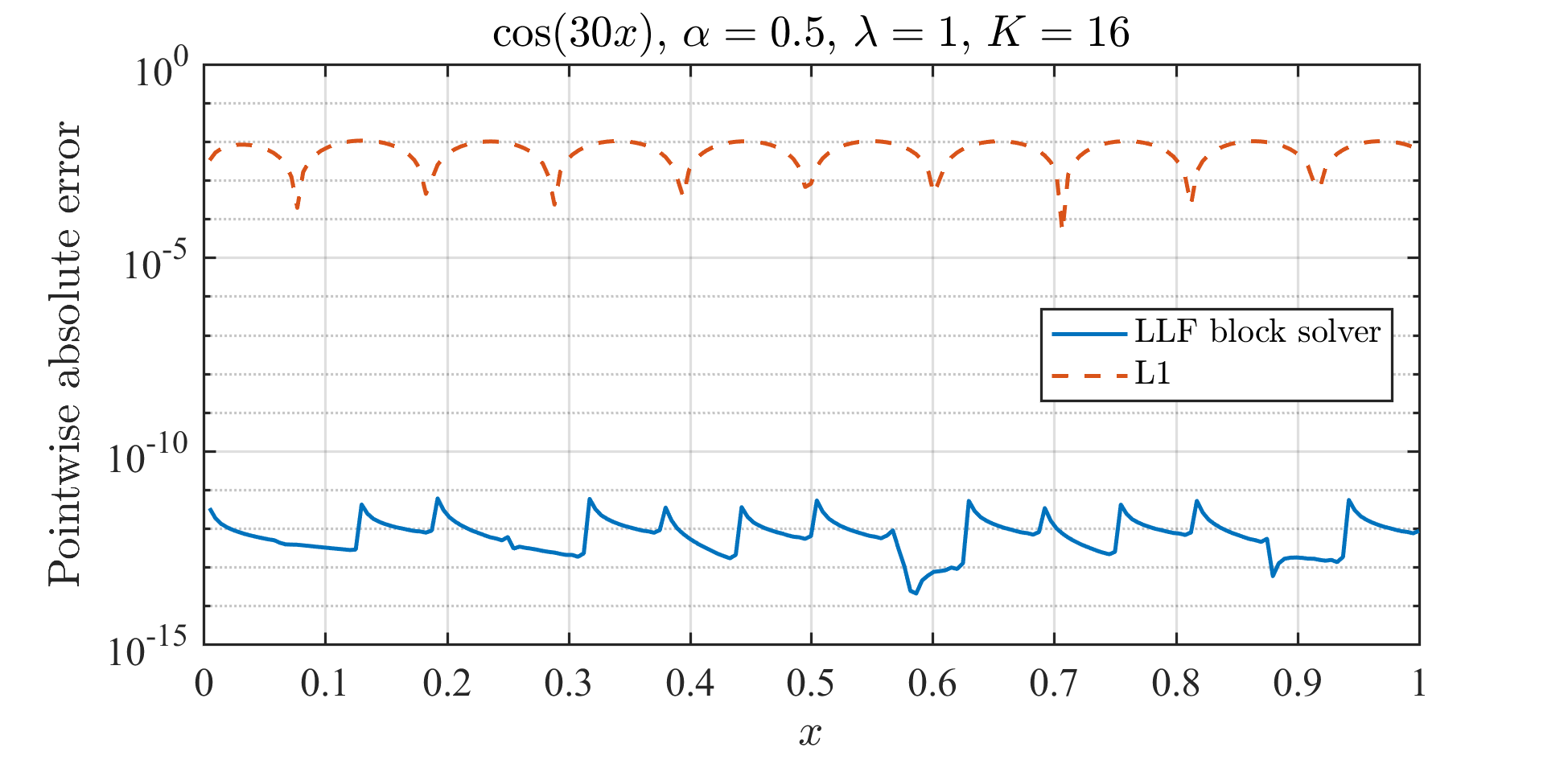}
\caption{Pointwise absolute errors for the manufactured solution
\(u(x)=\cos(30x)\), \(\alpha=0.5\), \(\lambda=1\), and \(K=16\).}
\label{fig:fde_error_osc30}
\end{figure}

This example is intended to demonstrate the block time-marching potential of
the local LLF-Caputo construction. A systematic study of nonlinear fractional
differential equations, fractional PDEs, and solutions with initial weak
singularities is left for future work.
\FloatBarrier
\section{Conclusion and Remarks}
\label{sec:conclusion}

In this paper, we developed a local Legendre frame method for the computation
of Caputo fractional derivatives. The method combines a local frame
representation from equispaced samples, an exponentially weighted GTSVD
regularization for computing stable local coefficients, and an exact moment
construction for the weakly singular Caputo weights.

The first contribution is the construction of a Caputo differentiation formula
based on restricted Legendre frames. The local basis functions are obtained by
restricting scaled Legendre polynomials from an extended interval to the
reference interval. This is different from a standard Legendre projection on
\([-1,1]\). After the local weighted GTSVD coefficients are computed, the
derivatives of the local frame basis functions are polynomials. Therefore, the
Caputo integral of each basis derivative can be evaluated by finite weighted
moments. In this way, the weak singularity of the Caputo kernel is incorporated
into precomputed analytic weights.

The second contribution is the error mechanism. The analysis shows that the
Caputo error is exactly the fractional integral of the local derivative
reconstruction error. Thus the weakly singular kernel does not introduce an
additional discretization error once the moment weights are evaluated exactly.
For the local reconstruction, the analysis does not identify the weighted
GTSVD solution with an ordinary Legendre projection. Instead, it uses the
typical outputs of weighted GTSVD regularization: an \(L^2\) reconstruction
error estimate together with a weighted smoothness bound on the reconstruction
error. Through an interpolation argument, these two estimates yield a bound
for the derivative reconstruction error. In the case of exponential weights
and analytic local functions, the derivative error retains exponential-type
convergence, and the Caputo approximation inherits the same convergence
behavior up to the bounded fractional integral operator.

The third contribution is computational. Since all local intervals share the
same reference frame matrix, the weighted GTSVD factorization is computed once
and reused on all subintervals. For uniform partitions, the Caputo history
weights depend only on the relative block distance and the local evaluation
pattern. Hence the nonlocal history term can be organized in a block form
rather than as a fully dense global differentiation matrix. When the number of
local modes is fixed, the resulting storage and computational cost scale
favorably with the total number of sampling points. This makes the method
particularly attractive when a relatively large number of equispaced samples
is needed to resolve smooth but structurally complicated functions.

The numerical experiments support these observations. The closed-form
polynomial verification confirms that the exact moment weights and the block
assembly evaluate the weakly singular Caputo action on the local frame
representation to roundoff accuracy. Tests on smooth, oscillatory, and moderately
complicated analytic functions show that high accuracy can be obtained with
fixed local parameters. The comparison with a classical L1 scheme demonstrates
the advantage of using local high-order reconstruction for Caputo
differentiation, while the comparison with a global Chebyshev benchmark
clarifies the role of the present method: global spectral methods remain
extremely effective when global nonuniform nodes are available, whereas the
proposed method provides a high-accuracy alternative based on local equispaced
data and reusable local computations. The fractional differential equation
example further shows that the local structure can be incorporated naturally
into a block time-marching formulation.

Several extensions remain for future work. First, the local nature of the
method makes it suitable for continuous piecewise smooth functions. Once
nonsmooth points are detected and aligned with local interfaces, the method
can be combined with one-sided reconstruction and local correction strategies,
which is difficult for a global spectral representation. Second, the block
structure suggests a possible route toward high-order time-marching methods
for time-fractional evolution equations. Compared with all-at-once global
spectral discretizations in time, the local frame formulation leads naturally
to a sequence of small block systems with known history contributions from
previous blocks. Third, for noisy data or solutions with initial weak
singularities, the truncation level, the exponential weight, and possible
singularity subtraction or graded partitions should be studied systematically.
These issues will be investigated in subsequent work.

\section*{Statements and Declarations}

\textbf{~~\quad Funding.}
This work was partly supported by the Natural Science Foundation of Shandong
Province under Grant Nos. ZR2026MS0015 and ZR2025MS28.

\textbf{Competing interests.}
The authors declare that they have no competing interests.

\textbf{Data availability.}
The data generated during the current study are available from the corresponding
author upon reasonable request.

\textbf{Author contributions.}
Zhenyu Zhao contributed to the conceptualization, methodology, analysis, and
writing of the manuscript. Benxue Gong contributed to the algorithm development,
numerical experiments, and manuscript revision. Tinggang Zhao and Xianzheng Jia
contributed to numerical verification, discussion, and manuscript revision. All
authors read and approved the final manuscript.


\begin{thebibliography}{99}
\normalsize

\bibitem{AdcockHuybrechs2019}
Adcock, B., Huybrechs, D.: Frames and Numerical Approximation. SIAM Review \textbf{61}(3), 443--473 (2019). doi: \href{https://doi.org/10.1137/17M1114697}{10.1137/17M1114697}.

\bibitem{AdcockHuybrechs2020}
Adcock, B., Huybrechs, D.: Frames and Numerical Approximation II: Generalized Sampling. Journal of Fourier Analysis and Applications \textbf{26}(6), 87 (2020). doi: \href{https://doi.org/10.1007/s00041-020-09793-x}{10.1007/s00041-020-09793-x}.

\bibitem{Alikhanov2015}
Alikhanov, A. A.: A New Difference Scheme for the Time Fractional Diffusion Equation. Journal of Computational Physics \textbf{280}, 424--438 (2015). doi: \href{https://doi.org/10.1016/j.jcp.2014.09.031}{10.1016/j.jcp.2014.09.031}.

\bibitem{Boyd2002}
Boyd, J. P.: A Comparison of Numerical Algorithms for Fourier Extension of the First, Second, and Third Kinds. Journal of Computational Physics \textbf{178}(1), 118--160 (2002). doi: \href{https://doi.org/10.1006/jcph.2002.7027}{10.1006/jcph.2002.7027}.

\bibitem{Diethelm2010}
Diethelm, K.: The Analysis of Fractional Differential Equations. Lecture Notes in Mathematics, vol. 2004. Springer, Berlin (2010). doi: \href{https://doi.org/10.1007/978-3-642-14574-2}{10.1007/978-3-642-14574-2}.

\bibitem{GongZhaoWang2026}
Gong, B., Zhao, Z., Wang, C.: Local Legendre Frame Approximation from Equispaced Data.
arXiv preprint arXiv:2605.09057 (2026). doi:
\href{https://doi.org/10.48550/arXiv.2605.09057}{10.48550/arXiv.2605.09057}.
.

\bibitem{Hansen1989}
Hansen, P. C.: Regularization, GSVD and Truncated GSVD. BIT Numerical Mathematics \textbf{29}, 491--504 (1989). doi: \href{https://doi.org/10.1007/BF01933211}{10.1007/BF01933211}.

\bibitem{Hansen1992}
Hansen, P. C., Sekii, T., Shibahashi, H.: The Modified Truncated SVD Method for Regularization in General Form. SIAM Journal on Scientific and Statistical Computing \textbf{13}(5), 1142--1150 (1992). doi: \href{https://doi.org/10.1137/0913066}{10.1137/0913066}.

\bibitem{Huybrechs2010}
Huybrechs, D.: On the Fourier Extension of Nonperiodic Functions. SIAM Journal on Numerical Analysis \textbf{47}(6), 4326--4355 (2010). doi: \href{https://doi.org/10.1137/090767231}{10.1137/090767231}.

\bibitem{LiZengLiu2012}
Li, C., Zeng, F., Liu, F.: Spectral Approximations to the Fractional Integral and Derivative. Fractional Calculus and Applied Analysis \textbf{15}(3), 383--406 (2012). doi: \href{https://doi.org/10.2478/s13540-012-0028-x}{10.2478/s13540-012-0028-x}.

\bibitem{LiLiu2018}
Li, L., Liu, J.-G.: A Generalized Definition of Caputo Derivatives and Its Application to Fractional ODEs. SIAM Journal on Mathematical Analysis \textbf{50}(3), 2867--2900 (2018). doi: \href{https://doi.org/10.1137/17M1160317}{10.1137/17M1160317}.

\bibitem{Lubich1986}
Lubich, C.: Discretized Fractional Calculus. SIAM Journal on Mathematical Analysis \textbf{17}(3), 704--719 (1986). doi: \href{https://doi.org/10.1137/0517050}{10.1137/0517050}.

\bibitem{Luchko2020}
Luchko, Y.: Fractional Derivatives and the Fundamental Theorem of Fractional Calculus. Fractional Calculus and Applied Analysis \textbf{23}(4), 939--966 (2020). doi: \href{https://doi.org/10.1515/fca-2020-0049}{10.1515/fca-2020-0049}.

\bibitem{MatthysenHuybrechs2016}
Matthysen, R., Huybrechs, D.: Fast Algorithms for the Computation of Fourier Extensions of Arbitrary Length. SIAM Journal on Scientific Computing \textbf{38}(2), A899--A922 (2016). doi: \href{https://doi.org/10.1137/15M1025733}{10.1137/15M1025733}.

\bibitem{MatthysenHuybrechs2018}
Matthysen, R., Huybrechs, D.: Function Approximation on Arbitrary Domains Using Fourier Extension Frames. SIAM Journal on Numerical Analysis \textbf{56}(3), 1360--1385 (2018). doi: \href{https://doi.org/10.1137/17M1134280}{10.1137/17M1134280}.

\bibitem{MokhtariMostajeran2019}
Mokhtari, R., Mostajeran, F.: A High Order Formula to Approximate the Caputo Fractional Derivative. Communications on Applied Mathematics and Computation (2019). doi: \href{https://doi.org/10.1007/s42967-019-00023-y}{10.1007/s42967-019-00023-y}.

\bibitem{Podlubny1999}
Podlubny, I.: Fractional Differential Equations. Mathematics in Science and Engineering, vol. 198. Academic Press, San Diego (1999).

\bibitem{YanSunZhang2017}
Yan, Y., Sun, Z.-Z., Zhang, J.: Fast Evaluation of the Caputo Fractional Derivative and Its Applications to Fractional Diffusion Equations: A Second-Order Scheme. Communications in Computational Physics \textbf{22}(4), 1028--1048 (2017). doi: \href{https://doi.org/10.4208/cicp.OA-2017-0019}{10.4208/cicp.OA-2017-0019}.

\bibitem{ZhaoWangLi2026}
Zhao, Z., Wang, Y., Yagola, A.G., Li, X.: A New Approach for Fourier Extension Based on Weighted Generalized Inverse.
arXiv preprint arXiv:2501.16096 (2025). doi:
\href{https://doi.org/10.48550/arXiv.2501.16096}{10.48550/arXiv.2501.16096}.

\end{thebibliography}
\end{document}